\newtheorem{theorem}{Theorem}[section]
\newtheorem{lemma}[theorem]{Lemma}
\newtheorem{proposition}[theorem]{Proposition}
\newtheorem{corollary}[theorem]{Corollary}
\newtheorem{remark}[theorem]{Remark}
\newtheorem{definition}[theorem]{Definition}
\def\be{\begin{equation}}
\def\ee{\end{equation}}
\def\br{\begin{eqnarray}}
\def\er{\end{eqnarray}}
 \title{Morse-Novikov cohomology on complex manifolds}
 \author{Lingxu Meng}
\address{Department of Mathematics\\  North University of China \\ Taiyuan, Shanxi 030051,
P.R. China}
\email{menglingxu@nuc.edu.cn}
\thanks{}
\date{}
\begin{document}
\maketitle

\begin{abstract}
We view Dolbeault-Morse-Novikov cohomology $H^{p,q}_\eta(X)$ as the cohomology of the sheaf $\Omega_{X,\eta}^p$ of $\eta$-holomorphic $p$-forms  and give several bimeromorphic invariants. Analogue to Dolbeault cohomology, we establish the Leray-Hirsch theorem and the blow-up formula for Dolbeault-Morse-Novikov cohomology. At last, we consider the relations between Morse-Novikov cohomology and Dolbeault-Morse-Novikov cohomology, moreover, investigate  stabilities of their dimensions under the deformations of complex structures. In some aspects, Morse-Novikov and Dolbeault-Morse-Novikov cohomology behave similarly with de Rham and Dolbeault cohomology.
\end{abstract}

\textbf{Keywords}: Morse-Novikov cohomology, weight $\theta$-sheaf, Dolbeault-Morse-Novikov cohomology, Leray-Hirsch theorem, blow-up formula, sheaf of $\eta$-holomorphic functions, bimeromorphic, $\theta$-betti number, $\eta$-hodge number, stability.

\textbf{AMSC}: 32C35, 57R19.

\section{Introduction}
Let $X$ be a  smooth manifold and  $\theta$ a real closed $1$-form on $X$. Set $\mathcal{A}^p(X)$ the space of real smooth $p$-forms and define $\textrm{d}_\theta:\mathcal{A}^p(X)\rightarrow\mathcal{A}^{p+1}(X)$ as $\textrm{d}_\theta\alpha=\textrm{d}\alpha+\theta\wedge\alpha$ for $\alpha\in\mathcal{A}^p(X)$.
Clearly, $\textrm{d}_\theta\circ \textrm{d}_\theta=0$, so we have a complex
\begin{displaymath}
\xymatrix{
\cdots\ar[r] &\mathcal{A}^{p-1}(X)\ar[r]^{\textrm{d}_\theta}&\mathcal{A}^{p}(X)\ar[r]^{\textrm{d}_\theta\quad}&\mathcal{A}^{p+1}(X)\cdots\ar[r]&\cdots
},
\end{displaymath}
whose cohomology $H^p_\theta(X)=H^p(\mathcal{A}^\bullet(X),\textrm{d}_\theta)$ is called the \emph{$p$-th Morse-Novikov cohomology}. For a complex closed $1$-form $\theta$ on $X$, denote $H^p_\theta(X,\mathbb{C})=H^p(\mathcal{A}_{\mathbb{C}}^\bullet(X),\textrm{d}_\theta)$, where $\mathcal{A}_{\mathbb{C}}^\bullet(X)=\mathcal{A}^\bullet(X)\otimes_{\mathbb{R}}\mathbb{C}$. If $\theta$ is real, $H^p_\theta(X,\mathbb{C})=H^p_\theta(X)\otimes_{\mathbb{R}}\mathbb{C}$. Similarly, we can define \emph{Morse-Novikov cohomology with compact support} $H^p_{\theta,c}(X)$ and $H^p_{\theta,c}(X,\mathbb{C})$.

This cohomology was originally defined by A. Lichnerowicz (\cite{L}) and D. Sullivan (\cite{S}) in the context of Poisson geometry and infinitesimal computations in topology, respectively. It is well used to study the locally conformally K\"{a}hlerian (l.c.K.) and locally conformally symplectic (l.c.s.) structures (\cite{Ba1,Ba2,BK,HR,LLMP,Vai}).  $H_{\theta}^*(X)$ can be viewed as the cohomology of  a flat bundle (weight line bundle) or a local constant sheaf of $\mathbb{R}$-modules with finite rank, referring to \cite{S, M1, N, OV1, YZ}. As we know, the two viewpoints are equivalent, whereas the latter is much more convenient, seeing \cite{M1}.

In his seminal paper \cite{N}, S. P. Novikov introduced a generalization of the classical Morse theory to the case of circle-valued Morse functions. A. Pajitnov \cite{P1}  observed the relation of the circle-valued Morse theory to the homology with local coefficients and perturbed de Rham differential, see also \cite{P2}, p. 414-416.

For smooth manfiolds, the Mayer-Vietoris sequence and Poincar\'{e} duality theorem were generalized on Morse-Novikov cohomology by  S. Haller and T. Rybicki  \cite{HR}. M. Le\'{o}n, B. L\'{o}pez,  J. C. Marrero  and E. Padr\'{o}n \cite{LLMP} proved that a compact Riemannian manifold $X$ endowed with a parallel one-form $\theta$ has trivial  Morse-Novikov cohomology.  By Atiyah-Singer index theorem, G.  Bande and D. Kotschick  \cite{BK} found that the Euler characteristic of Morse-Novikov cohomology coincides with the usual Euler characteristic. In \cite{M1}, we proved several K\"{u}nneth formulas and theorems of Leray-Hirsch type.

For complex manifolds, I. Vaisman \cite{Vai} studied the classical operators twisted with a closed one-form on l.c.K. manifolds. In \cite{M1}, we gave two explicit formulas of blow-ups of complex manifolds for Morse-Novikov cohomology.  As we know, de Rham cohomology is closely related to Dolbeault cohomology on complex manifolds, such as Hodge decomposition  theorem,  hard Lefschetz theorem, Hodge's index theorem, etc.. Inspired by these, it is necessary to study Dolbeault-Morse-Novikov cohomology, which is a generalization of Dolbeault cohomology. Recently, L. Ornea, M. Verbitsky,  and V. Vuletescu \cite{OVV2} showed that, for a locally conformally K\"{a}hler manifold $X$ with proper potential, $H_{a\eta}^{*,*}(X)=0$ holds for all $a\in \mathbb{C}$ but a discrete countable subset, where $\eta$ is the $(0,1)$-part of Lee form $\theta$ of $X$.

L. Ornea, M. Verbitsky,  and V. Vuletescu \cite{OVV1} proved that the blow-up of an l.c.K. manifold along a submanifold is l.c.K. if and only if the submanifold is globally conformally equivalent to a K\"{a}hler submanifold. So, it is necessary to consider the variance of the Morse-Novikov  (\cite{M1}) and Dolbeault-Morse-Novikov cohomology under blowing up.
\begin{theorem}\label{main1}
Let $\pi:\widetilde{X}\rightarrow X$ be the blow-up of a connected complex manifold $X$ along a connected complex submanifold $Z$ and $i_E:E=\pi^{-1}(Z)\rightarrow\widetilde{X}$ the inclusion of the exceptional divisor $E$ into $\widetilde{X}$. Suppose that $\eta$ is a $\bar{\partial}$-closed $(0,1)$-form on $X$  and $\tilde{\eta}=\pi^*\eta$. Then, for any $p$, $q$,
\begin{displaymath}
\pi^*+\sum_{i=0}^{r-2}(i_E)_*\circ (h^i\cup)\circ (\pi|_E)^*
\end{displaymath}
gives an isomorphism
\begin{equation}\label{iso}
H_{\eta}^{p,q}(X)\oplus \bigoplus_{i=0}^{r-2}H_{\eta|_Z}^{p-1-i,q-1-i}(Z)\tilde{\rightarrow} H_{\tilde{\eta}}^{p,q}(\widetilde{X}),
\end{equation}
where $r=\emph{codim}_{\mathbb{C}}Z$ and $h$ is defined in \emph{(\ref{chern})}.
\end{theorem}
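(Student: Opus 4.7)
The plan is to imitate the classical proof of the Dolbeault blow-up formula, exploiting the two key tools developed earlier in the paper: the sheaf-theoretic description $H^{p,q}_\eta(X)=H^q(X,\Omega^p_{X,\eta})$ and the Leray--Hirsch theorem for Dolbeault--Morse--Novikov cohomology. The central observation is that $\eta$ is locally $\bar\partial$-exact: on a sufficiently small open set $U$ one can write $\eta|_U=\bar\partial f$ for a smooth function $f$, and multiplication by $e^{f}$ then gives an isomorphism of complexes $(\mathcal{A}^{p,\bullet}(U),\bar\partial_\eta)\cong(\mathcal{A}^{p,\bullet}(U),\bar\partial)$. Applying the analogous conjugation by $e^{\pi^{*}f}$ on $\widetilde{U}=\pi^{-1}(U)$ reduces every local computation in the twisted setting to the classical one.

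First I would check that the three ingredients $\pi^{*}$, $h^i\cup$ and $(i_E)_{*}$ are well defined at the level of twisted complexes. Since $\widetilde{\eta}=\pi^{*}\eta$ and $\widetilde{\eta}|_E=(\pi|_E)^{*}(\eta|_Z)$, both pullbacks commute with the twisted differentials. The class $h$ from (\ref{chern}) is an ordinary $\bar\partial$-closed Dolbeault class, so cup product with $h^i$ preserves the twisted differential. The Gysin-type pushforward $(i_E)_{*}$ is defined as a Thom-like map which, after local conjugation by $e^{\pi^{*}f}$, reduces to the classical holomorphic Gysin map along $E\hookrightarrow\widetilde{X}$.

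Next I would perform a Mayer--Vietoris reduction. Cover $X=U\cup V$ with $U$ a tubular neighbourhood of $Z$ small enough that $\eta|_U=\bar\partial f$ and $V=X\setminus Z$; correspondingly set $\widetilde{U}=\pi^{-1}(U)$ and $\widetilde{V}=\widetilde{X}\setminus E$. Because $\pi$ restricts to biholomorphisms $\widetilde{V}\to V$ and $\widetilde{U}\cap\widetilde{V}\to U\cap V$ which identify $\widetilde{\eta}$ with $\eta$, the pullback $\pi^{*}$ is already an isomorphism on the $V$-pieces. Comparing the Mayer--Vietoris long exact sequences for $\Omega^p_{X,\eta}$ and $\Omega^p_{\widetilde{X},\widetilde{\eta}}$ attached to these covers and applying the five-lemma reduces the global statement to its local counterpart over $U$, where the untwistings $e^{f}$ and $e^{\pi^{*}f}$ transport the whole problem to the classical Dolbeault blow-up formula for $\pi|_{\widetilde{U}}:\widetilde{U}\to U$; the direct sum $\bigoplus_{i=0}^{r-2}H^{p-1-i,q-1-i}_{\eta|_Z}(Z)$ then arises precisely as the Leray--Hirsch decomposition of $H^{*,*}_{\widetilde{\eta}|_E}(E)$ in powers of $h$.

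The main obstacle I foresee is the bookkeeping in this last step: one must verify that, under the local conjugations by $e^{\pi^{*}f}$ and $e^{(\pi|_E)^{*}f}$, the twisted pushforward $(i_E)_{*}$ matches the classical Gysin pushforward, so that the composite $(i_E)_{*}\circ(h^i\cup)\circ(\pi|_E)^{*}$ lines up termwise with the corresponding summand of the classical blow-up formula. Transition functions $e^{f_\alpha-f_\beta}$ between different local choices of $f$ are $\bar\partial$-closed, which should guarantee that the local identifications glue and that the two sides of (\ref{iso}) are compared by an honest global map; once this compatibility is in place, the five-lemma delivers the desired isomorphism on all of $X$.
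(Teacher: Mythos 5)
Your overall strategy (local untwisting of $\eta$ by $e^{f}$, reduction to the known Dolbeault blow-up formula, Mayer--Vietoris plus the five lemma) is the right one and is essentially what the paper does, but your specific Mayer--Vietoris reduction has a genuine gap. You cover $X=U\cup V$ with $U$ a tubular neighbourhood of $Z$ and assume $\eta|_U=\bar\partial f$. There is no reason for this: a neighbourhood $U$ of $Z$ retracts onto $Z$, and the class of $\eta|_Z$ in $H^{0,1}(Z)$ may well be nonzero (e.g.\ $Z$ an elliptic curve or any compact $Z$ with $h^{0,1}(Z)>0$), in which case $\eta$ is not $\bar\partial$-exact on \emph{any} neighbourhood of $Z$. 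So the single open set on which all the interesting topology of the blow-up is concentrated is exactly the one where your untwisting trick fails, and the reduction to the classical formula over $U$ does not go through. The paper circumvents this by never trying to untwist near all of $Z$ at once: it takes a basis $\mathcal{U}$ of Stein open sets (where $H^{0,1}=0$ forces $\eta$ to be exact and the classical formula of \cite{M2} applies), and then propagates the isomorphism to all of $X$ by an inductive Mayer--Vietoris argument over finite unions, disjoint unions, and a finite cover $X=V_1\cup\dots\cup V_l$ by countable disjoint unions of such sets (the claim $(\ast)$ in the proof of Theorem \ref{L-H}).

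A second, more technical point you gloss over: to run Mayer--Vietoris with the five lemma you need the comparison map to be realized at the cochain level, compatibly with restriction to open subsets, so that you actually get a commutative ladder of long exact sequences. Since $(i_E)_*$ does not preserve smooth forms, the paper defines $\varphi_U$ from the complex $\mathcal{F}^{p,\bullet}(U)=\mathcal{A}^{p,\bullet}(U)\oplus\bigoplus_i \mathcal{A}^{p-1-i,\bullet-1-i}(Z\cap U)$ into \emph{currents} $\mathcal{D}^{\prime p,\bullet}(\widetilde{U})$, using the current-level pushforward and a fixed Chern form $t$ representing $h$, and checks compatibility with restrictions via Lemma \ref{com} ($i'_*j'^*=j^*i_*$). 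Your proposal only discusses the maps on cohomology, so the naturality needed for the ladder is not actually established. Both issues are fixable, but fixing the first one essentially forces you into the paper's Stein-cover induction rather than your two-set cover.
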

For $\eta=0$, S. Rao, S. Yang, and X.-D. Yang \cite{RYY} first proved there exists an isomorphism (\ref{iso}) on a compact complex manifold $X$. It seems difficult to write out  it explicitly using their method. In \cite{M2},  we write out an isomorphism explicitly on any (possibly noncompact) base with a different way.

Deformations of complex structures play a significant role in studying K\"{a}hlerian, balanced, strongly Gauduchon and $\partial\overline{\partial}$-manifolds. For l.c.K. geometry, we have known the facts that a deformation of a l.c.K. manifold is generally not l.c.K. (\cite{Be}) and the class of compact l.c.K. manifolds with potential is stable under small deformations (\cite{OV2}). These results inspire us to investigate  behaviors of Dolbeault-Morse-Novikov cohomology under deformations.
\begin{lemma}\label{main2}
Let $f:X\rightarrow Y$ be a proper surjective submersion of connected smooth manifolds and $\theta$ a real $($resp. complex$)$ closed $1$-form on $X$. Then, for any $k$, the higher direct image $R^kf_*\underline{\mathbb{R}}_{X,\theta}$  $($resp. $R^kf_*\underline{\mathbb{C}}_{X,\theta}$$)$ is a local system of $\mathbb{R}$ $($resp. $\mathbb{C}$$)$-modules with finite rank.
\end{lemma}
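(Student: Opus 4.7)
The plan is to use Ehresmann's fibration theorem to reduce to a trivial bundle over a contractible base, then apply proper base change to identify the stalks with twisted fiber cohomology, which is finite-dimensional because the fibers are compact.

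First I would recall that $\underline{\mathbb{R}}_{X,\theta}=\ker(d_\theta:\mathcal{A}^0_X\to\mathcal{A}^1_X)$ is a rank-one local system of $\mathbb{R}$-modules on $X$: on any simply connected open $U$ with $\theta|_U=du$ it is trivialized by the section $e^{-u}$, and the de Rham complex $(\mathcal{A}^\bullet_X,d_\theta)$ is a fine resolution of it. Since $f:X\to Y$ is a proper submersion of smooth manifolds, Ehresmann's theorem makes it into a locally trivial smooth fibration with compact fiber $F$; I would fix $y\in Y$, a contractible neighborhood $V\ni y$, and a trivialization $\phi:f^{-1}(V)\xrightarrow{\sim}V\times F$ over $V$. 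Properness then lets me invoke the proper base change theorem for sheaf cohomology to obtain
\[
(R^kf_*\underline{\mathbb{R}}_{X,\theta})_y\cong H^k(F_y,\underline{\mathbb{R}}_{X,\theta}|_{F_y})=H^k_{\theta|_{F_y}}(F_y),
\]
which is finite-dimensional since $F_y$ is compact (the twisted Laplacian $\Delta_\theta=d_\theta d_\theta^{*}+d_\theta^{*}d_\theta$ for any Riemannian metric is elliptic, so Hodge theory computes this cohomology as a finite-dimensional space of $d_\theta$-harmonic forms).

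For local constancy, I would work in the trivialization $\phi$. Because $V$ is simply connected, $\pi_1(V\times F)=\pi_1(F)$, and the classification of local systems by $\pi_1$-representations gives $\phi_*(\underline{\mathbb{R}}_{X,\theta}|_{f^{-1}(V)})\cong\mathrm{pr}_F^{*}\mathcal{L}_0$, where $\mathcal{L}_0$ is the restriction of the weight sheaf to $F_{y_0}$ for a chosen $y_0\in V$. Proper base change applied to the Cartesian square formed by $\mathrm{pr}_V:V\times F\to V$ and the proper structure map $F\to\mathrm{pt}$ then yields $R^k\mathrm{pr}_{V,*}\mathrm{pr}_F^{*}\mathcal{L}_0\cong H^k(F,\mathcal{L}_0)\otimes\underline{\mathbb{R}}_V$, a constant sheaf on $V$ of finite rank. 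Covering $Y$ by such contractible neighborhoods then exhibits $R^kf_*\underline{\mathbb{R}}_{X,\theta}$ as locally constant with finite-dimensional stalks, i.e. a local system of finite rank; the complex case is identical.

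The main obstacle I anticipate is the identification $\phi_*(\underline{\mathbb{R}}_{X,\theta}|_{f^{-1}(V)})\cong\mathrm{pr}_F^{*}\mathcal{L}_0$ on each trivializing patch, which rests on contractibility of $V$ together with the $\pi_1$-classification of local systems and must be handled with some care since this isomorphism is only canonical up to choice of $y_0$. Once it is in place, the finite rank and local constancy both follow from the standard base change machinery.
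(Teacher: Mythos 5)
Your proposal is correct and follows the same overall strategy as the paper: trivialize $f$ locally via Ehresmann's theorem, identify the weight sheaf on the trivialized piece $V\times F$ with a pullback from the fiber, and then apply proper base change to the square over a point to exhibit $R^kf_*\underline{\mathbb{R}}_{X,\theta}$ as a constant sheaf of finite rank on $V$. The one step you handle differently is the identification $\underline{\mathbb{R}}_{V\times F,\,T^*\theta}\cong \mathrm{pr}_F^{-1}\mathcal{L}_0$: you invoke the monodromy classification of local systems together with the fact that $\{y_0\}\times F\hookrightarrow V\times F$ induces an isomorphism on $\pi_1$, whereas the paper argues concretely with the weight-sheaf formalism --- it uses the K\"unneth formula to see that $\mathrm{pr}_1^*:H^1(F)\to H^1(V\times F)$ is an isomorphism, writes $T^*\theta=\mathrm{pr}_1^*\theta_o+\mathrm{d}u$, and then applies Lemma \ref{lem fun 1} (parts (1)--(3)) to conclude $\underline{\mathbb{R}}_{V\times F,T^*\theta}\cong \mathrm{pr}_1^{-1}\underline{\mathbb{R}}_{F,\theta_o}$. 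The two arguments prove the same thing; yours is more abstract and works for an arbitrary rank-one local system, while the paper's stays inside the $\mathrm{d}_\theta$-calculus and produces the closed form $\theta_o$ on the fiber explicitly (which it then reuses to identify the stalks with $H^k_{\theta_o}(X_o)$). Your finiteness argument via the elliptic twisted Laplacian is also a valid substitute for the paper's implicit appeal to finite-dimensionality of local-system cohomology on a compact manifold. No gaps.
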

Using above lemma and the relation between Morse-Novikov and Dolbeault-Morse-Novikov cohomologies, we get the theorem of stability of $\eta$-hodge numbers under the deformation.
\begin{theorem}\label{1.3}
Let $f:X\rightarrow Y$ be a family of complex manifolds and $\theta$ a complex closed $1$-form on $X$. Assume $b_k(X_o,\theta|_{X_o})=\sum_{p+q=k}h_{\eta|_{X_o}}^{p,q}(X_o)$ for some $k$ and some point $o\in Y$, where $\eta$ is the $(0,1)$-part of $\theta$. Then, for any $t$ near $o$, $h_{\eta|_{X_t}}^{p,q}(X_t)=h_{\eta|_{X_o}}^{p,q}(X_o)$, where $\eta$ is the $(0,1)$-part of $\theta$ and $p+q=k$.
\end{theorem}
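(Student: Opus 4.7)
The plan is to imitate the classical Kodaira--Spencer stability argument for Hodge numbers, transplanted into the twisted framework provided by $\mathrm{d}_\theta$. The three ingredients I would assemble are (i) local constancy in $t$ of the $\theta$-Betti numbers $b_k(X_t,\theta|_{X_t})$, (ii) a Frölicher-type inequality $b_k(X_t,\theta|_{X_t})\le\sum_{p+q=k}h^{p,q}_{\eta|_{X_t}}(X_t)$ holding at every $t$, and (iii) upper semi-continuity of each individual summand $t\mapsto h^{p,q}_{\eta|_{X_t}}(X_t)$. The hypothesis that equality holds in (ii) at $t=o$ will then pinch each $h^{p,q}_{\eta|_{X_t}}(X_t)$ with $p+q=k$ to be locally constant near $o$ and equal to its value at $o$.

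Ingredient (i) is immediate from Lemma \ref{main2}: since $f:X\to Y$ is a proper surjective submersion and $\theta$ is a complex closed $1$-form, $R^kf_*\underline{\mathbb{C}}_{X,\theta}$ is a local system of finite rank, whose stalk at $t\in Y$ is canonically $H^k_{\theta|_{X_t}}(X_t,\mathbb{C})$, so its dimension is locally constant. For (ii), writing $\theta=\xi+\eta$ with $\xi$ of type $(1,0)$ and $\eta$ of type $(0,1)$, the condition $\mathrm{d}\theta=0$ produces $\partial\xi=0$, $\bar{\partial}\eta=0$ and $\partial\eta+\bar{\partial}\xi=0$, which together imply that $(\mathcal{A}^{\bullet,\bullet}_{\mathbb{C}}(X_t),\,\partial+\xi|_{X_t}\wedge,\,\bar{\partial}+\eta|_{X_t}\wedge)$ is a double complex with total cohomology $H^{\bullet}_{\theta|_{X_t}}(X_t,\mathbb{C})$; its column filtration yields a Frölicher-type spectral sequence with $E_1^{p,q}=H^{p,q}_{\eta|_{X_t}}(X_t)$ converging to $H^{p+q}_{\theta|_{X_t}}(X_t,\mathbb{C})$, and the inequality follows from abutment. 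For (iii), I would use the identification $H^{p,q}_{\eta}(X_t)\cong H^q(X_t,\Omega^p_{X_t,\eta|_{X_t}})$ highlighted in the abstract and realise the fibrewise sheaves $\Omega^p_{X_t,\eta|_{X_t}}$ as the restrictions to fibres of a single coherent, $f$-flat sheaf on the total space $X$: locally on $X$ the Dolbeault lemma gives $\eta=\bar{\partial}\phi_\alpha$, so the differences $e^{\phi_\alpha-\phi_\beta}$ are holomorphic transition cocycles defining a holomorphic line bundle $\mathcal{L}$ on $X$ with $\Omega^p_{X,\eta}\cong\Omega^p_X\otimes\mathcal{L}$; then $\Omega^p_{X/Y}\otimes\mathcal{L}$ is coherent and $f$-flat on $X$ with fibre $\Omega^p_{X_t,\eta|_{X_t}}$ at $t$, and Grauert's upper semi-continuity theorem delivers the claim.

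Granted these three ingredients, the conclusion is the classical pinching. At $t=o$ the hypothesis gives equality in (ii); by (i) the left side stays constant in a neighbourhood of $o$; by (ii) and (iii) the right side is an upper semi-continuous function of $t$ lying above this constant and equal to it at $o$. Hence the right side is locally constant near $o$, and being a finite sum of non-negative integer-valued upper semi-continuous functions, each summand $h^{p,q}_{\eta|_{X_t}}(X_t)$ with $p+q=k$ is locally constant near $o$ and equal to its value at $o$. The main obstacle is ingredient (iii), namely verifying that the $\eta$-twisted fibrewise sheaves genuinely assemble into a coherent $f$-flat sheaf on $X$; once the line-bundle construction above is written out carefully, the semi-continuity input from Grauert and the concluding pinching are formal.
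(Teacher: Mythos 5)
Your proposal is correct and follows essentially the same route as the paper: local constancy of $b_k$ from Lemma \ref{main2}, the twisted Fr\"olicher inequality from the column filtration of the double complex $(\mathcal{A}^{*,*},\partial_{\bar\zeta},\bar\partial_\eta)$, and Grauert semi-continuity applied to $\Omega^p_{X/Y}\otimes_{\mathcal{O}_X}\mathcal{O}_{X,\eta}$ (your line bundle $\mathcal{L}$ is exactly the locally free sheaf $\mathcal{O}_{X,\eta}$ already constructed in the paper), followed by the same pinching. No gaps.
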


In this article, we investigate the Dolbeault-Morse-Novikov cohomology via the theory of sheaves. In Sec. 2 and 3, we recall the Morse-Novikov cohomology and define the Dolbeault-Morse-Novikov cohomology, respectively. In Sec. 4, we study the properties of the sheaf $\mathcal{O}_{X,\eta}$ of $\eta$-holomorphic functions  and show that  $H_\eta^{p,0}(X)$, $H_{\eta,c}^{p,0}(X)$, $H_\eta^{0,p}(X)$ and $H_{\eta,c}^{0,p}(X)$ are all bimeromorphic invariants. In particular, we prove Leray-Hirsch theorem and Theorem \ref{main1}.  In Sec. 5,  Lemma \ref{main2} and Theorem \ref{1.3} are proved.

\section{Morse-Novikov cohomology}
We first recall the weight $\theta$-sheaf, refering to \cite{M1}. Let $\mathcal{A}_X^k$ be the sheaf of germs of real smooth $k$-forms and $\underline{\mathbb{R}}_{X}$, $\underline{\mathbb{C}}_{X}$ be constant sheaves with coefficient $\mathbb{R}$, $\mathbb{C}$  on $X$, respectively. Set $\mathcal{A}_{X,\mathbb{C}}^k=\mathcal{A}_{X}^k\otimes_{\underline{\mathbb{R}}_{X}}\underline{\mathbb{C}}_{X}$. Define $\textrm{d}_\theta:\mathcal{A}_{X,\mathbb{C}}^k\rightarrow\mathcal{A}_{X,\mathbb{C}}^{k+1}$ as $\textrm{d}_\theta\alpha=\textrm{d}\alpha+\theta\wedge\alpha$, for $\alpha\in\mathcal{A}_{X,\mathbb{C}}^k$.
\begin{definition}
The kernel of $\emph{d}_\theta:\mathcal{A}_{X,\mathbb{C}}^0\rightarrow\mathcal{A}_{X,\mathbb{C}}^1$ is called a weight $\theta$-sheaf, denoted by $\underline{\mathbb{C}}_{X,\theta}$.
\end{definition}
Locally, $\theta=\textrm{d}u$ for a smooth complex-valued function $u$, so $\textrm{d}_\theta=e^{-u}\circ \textrm{d}\circ e^u$ and $\underline{\mathbb{C}}_{X,\theta}=\mathbb{C}e^{-u}$. Hence, the weight $\theta$-sheaf $\underline{\mathbb{C}}_{X,\theta}$ is a local system of $\mathbb{C}$-modules with rank 1. We have a resolution of soft sheaves of $\underline{\mathbb{C}}_{X,\theta}$
\begin{displaymath}
\xymatrix{
0\ar[r] &\underline{\mathbb{C}}_{X,\theta}\ar[r]^{i} &\mathcal{A}_{X,\mathbb{C}}^0\ar[r]^{\textrm{d}_\theta} &\mathcal{A}_{X,\mathbb{C}}^1\ar[r]^{\textrm{d}_\theta}&\cdots\ar[r]^{\textrm{d}_\theta}&\mathcal{A}_{X,\mathbb{C}}^n\ar[r]&0
},
\end{displaymath}
where $i$ is the natural inclusion. So
\begin{displaymath}
H_\theta^*(X,\mathbb{C})\cong H^*(X,\underline{\mathbb{C}}_{X,\theta}),\quad H_{\theta,c}^*(X,\mathbb{C})\cong H_c^*(X,\underline{\mathbb{C}}_{X,\theta}).
\end{displaymath}
For $\textrm{d}_\theta$-closed $\alpha\in\mathcal{A}_{\mathbb{C}}^*(X)$, denote by $[\alpha]_{\theta}$ (resp. $[\alpha]_{\theta,c}$) the class in $H_\theta^*(X,\mathbb{C})$ (resp. $H_{\theta,c}^*(X,\mathbb{C})$).

Assume $X$ is also oriented. Let $\mathcal{D}_X^{\prime k}$ be the sheaf of germs of real $k$-currents and $\mathcal{D}_{X,\mathbb{C}}^{\prime k}=\mathcal{D}_X^{\prime k}\otimes_{\underline{\mathbb{R}}_X}\underline{\mathbb{C}}_X$. Similarly, define $\textrm{d}_\theta:\mathcal{D}_{X,\mathbb{C}}^{\prime k}\rightarrow\mathcal{D}_{X,\mathbb{C}}^{\prime k+1}$ as $\textrm{d}_\theta T=\textrm{d}T+\theta\wedge T$ for $T\in\mathcal{D}_{X,\mathbb{C}}^{\prime k}$. We have another resolution
\begin{displaymath}
\xymatrix{
0\ar[r] &\underline{\mathbb{C}}_{X,\theta}\ar[r]^{i} &\mathcal{D}_{X,\mathbb{C}}^{\prime 0}\ar[r]^{\textrm{d}_\theta} &\mathcal{D}_{X,\mathbb{C}}^{\prime 1}\ar[r]^{\textrm{d}_\theta}&\cdots\ar[r]^{\textrm{d}_\theta}&\mathcal{D}_{X,\mathbb{C}}^{\prime n}\ar[r]&0
},
\end{displaymath}
of soft sheaves of $\underline{\mathbb{C}}_{X,\theta}$, where $i$ is the natural inclusion. By \cite{Dem}, p. 213 (6.3) (6.4)  and p. 217 (7.8), the natural morphism $\mathcal{A}_{X,\mathbb{C}}^\bullet\hookrightarrow\mathcal{D}_{X,\mathbb{C}}^{\prime\bullet}$ of resolutions induces isomorphisms
\begin{displaymath}
H_\theta^*(X,\mathbb{C})\tilde{\rightarrow} H^*(\mathcal{D}^{\prime\bullet}_{\mathbb{C}}(X),\textrm{d}_{\theta}), \quad H_{\theta,c}^*(X,\mathbb{C})\tilde{\rightarrow} H^*(\mathcal{D}_{\mathbb{C},c}^{\prime\bullet}(X),\textrm{d}_{\theta}).
\end{displaymath}
For $\textrm{d}_\theta$-closed $T\in\mathcal{D}_\mathbb{C}^{\prime*}(X)$, denote by $[T]_{\theta}$ (resp. $[T]_{\theta,c}$ ) the class in $H_\theta^*(X,\mathbb{C})$ (resp. $H_{\theta,c}^*(X,\mathbb{C})$).

\begin{lemma}[\cite{M1}]\label{lem fun 1}
Let $X$ be a connected smooth manifold and $\theta$ a complex closed $1$-form on $X$.

$(1)$ $\underline{\mathbb{C}}_{X,\theta}\cong\underline{\mathbb{C}}_{X}$ if and only if $\theta$ is exact. More precisely, if $\theta=\emph{d}u$ for $u\in \mathcal{A}_{\mathbb{C}}^0(X)$, then $h\mapsto e^{u}\cdot h$ gives an isomorphism $\underline{\mathbb{C}}_{X,\theta}\tilde{\rightarrow}\underline{\mathbb{C}}_X$ of sheaves.

$(2)$ If $\mu$ is a closed $1$-form on $X$, then $\underline{\mathbb{C}}_{X,\theta}\otimes_{\underline{\mathbb{C}}_X}\underline{\mathbb{C}}_{X,\mu}=\underline{\mathbb{C}}_{X,\theta+\mu }$.

$(3)$ Suppose $f:Y\rightarrow X$ is a smooth map between connected smooth manifolds.  Then inverse image sheaf $f^{-1}\underline{\mathbb{C}}_{X,\theta}=\underline{\mathbb{C}}_{Y,f^*\theta}$.
\end{lemma}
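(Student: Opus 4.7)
My plan is to leverage the local description noted immediately before the lemma: on any open set where $\theta = \textrm{d}u$ for some $u \in \mathcal{A}_{\mathbb{C}}^0$, the weight sheaf is $\underline{\mathbb{C}}_{X,\theta} = \mathbb{C}\cdot e^{-u}$. For (1), the ``if'' direction is a direct calculation from this: assuming $\theta = \textrm{d}u$ globally, the identity
\[
\textrm{d}(e^u h) = e^u(\textrm{d}h + \theta\wedge h) = e^u\,\textrm{d}_\theta h
\]
shows that $h \mapsto e^u h$ carries $\textrm{d}_\theta$-flat sections to $\textrm{d}$-flat ones, and the inverse $h' \mapsto e^{-u}h'$ is verified symmetrically, yielding the asserted sheaf isomorphism. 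For the converse, a sheaf isomorphism $\underline{\mathbb{C}}_{X,\theta} \cong \underline{\mathbb{C}}_X$ produces a nowhere-vanishing global section $s$ of $\underline{\mathbb{C}}_{X,\theta}$, i.e.\ $s \in \mathcal{A}_{\mathbb{C}}^0(X)$ with $\textrm{d}s = -\theta s$ and $s$ never zero; then $\theta = -\textrm{d}s/s$, and by choosing compatible branches of $-\log s$ on a suitable open cover one builds a global $u \in \mathcal{A}_{\mathbb{C}}^0(X)$ with $\theta = \textrm{d}u$.

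For (2), I would work on a cover on which both $\theta = \textrm{d}u$ and $\mu = \textrm{d}v$ admit complex primitives. The stalk-level assignment $e^{-u} \otimes e^{-v} \mapsto e^{-(u+v)}$ extends $\underline{\mathbb{C}}_X$-linearly to an isomorphism on each chart, and these local isomorphisms glue because the transition cocycles of $\underline{\mathbb{C}}_{X,\theta}$ and $\underline{\mathbb{C}}_{X,\mu}$ (coming from the ambiguity $u \mapsto u + c$ for locally constant $c$) multiply to the transition cocycle of $\underline{\mathbb{C}}_{X,\theta+\mu}$. Part (3) is similarly local: if $\theta = \textrm{d}u$ on $U \subset X$, then $f^*\theta = \textrm{d}(u\circ f)$ on $f^{-1}(U)$, and the local descriptions give
\[
f^{-1}\underline{\mathbb{C}}_{X,\theta}|_{f^{-1}(U)} = \mathbb{C}\cdot e^{-u\circ f} = \underline{\mathbb{C}}_{Y,f^*\theta}|_{f^{-1}(U)},
\]
while naturality of pullback makes these identifications compatible on overlaps.

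The one delicate step is the converse in (1): manufacturing a global complex smooth logarithm $u$ from the nowhere-vanishing flat section $s$, which requires using that the sheaf isomorphism trivializes the cocycle of $\underline{\mathbb{C}}_{X,\theta}$, not merely its stalks. Parts (2) and (3) are essentially bookkeeping once one commits to the description $\underline{\mathbb{C}}_{X,\theta} = \mathbb{C}\cdot e^{-u}$ and tracks how the factor $e^{-u}$ transforms under a change of local primitive; I expect no further obstacle beyond the logarithm construction.
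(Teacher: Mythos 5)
Your parts (2) and (3), and the ``if'' direction of (1), are correct and essentially identical to the paper's argument: everything reduces to the local description $\underline{\mathbb{C}}_{X,\theta}=\mathbb{C}e^{-u}$ and to multiplying, respectively pulling back, the factors $e^{-u}$. The genuine problem is the converse in (1), which you rightly single out as the delicate step but do not actually close. From a sheaf isomorphism you obtain a nowhere-vanishing $s\in\mathcal{A}^0_{\mathbb{C}}(X)$ with $\mathrm{d}s+\theta s=0$, hence $\theta=-\mathrm{d}s/s$; but a nowhere-vanishing \emph{complex-valued} smooth function need not admit a global smooth logarithm. The obstruction is the winding class of $s:X\rightarrow\mathbb{C}^*$ in $H^1(X,\mathbb{Z})$, and the fact that the isomorphism trivializes the cocycle of $\underline{\mathbb{C}}_{X,\theta}$ is \emph{equivalent} to the existence of the global flat section $s$ --- it gives you no additional leverage against this obstruction. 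In fact the step cannot be repaired for general complex $\theta$: on $X=S^1$ take $\theta=-i\,\mathrm{d}\phi$; then $s=e^{i\phi}$ is a global nowhere-vanishing $\mathrm{d}_\theta$-flat function, so $\underline{\mathbb{C}}_{X,\theta}\cong\underline{\mathbb{C}}_X$, yet $\int_{S^1}\theta=-2\pi i\neq 0$ and $\theta$ is not exact. (The correct criterion for triviality is that all periods of $\theta$ lie in $2\pi i\,\mathbb{Z}$.)

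The paper takes a different route here: it notes $H^0(X,\underline{\mathbb{C}}_{X,\theta})=H^0_\theta(X)\cong\mathbb{C}$ and quotes Haller--Rybicki, Example 1.6, which concerns \emph{real} $\theta$. For real $\theta$ your construction does go through after a small repair: locally $s=ce^{-u}$ with $u$ real, so $\log|s|$ is a globally defined smooth function with $\mathrm{d}\log|s|=-\theta$, and no choice of branches is needed. So you should either restrict the converse to real $\theta$ (where your argument, thus corrected, is sound and is essentially the content of the paper's citation) or, in the complex case, weaken the conclusion from exactness to integrality of the periods of $\theta/2\pi i$.
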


\begin{proof}
$(1)$ If $\underline{\mathbb{C}}_{X,\theta}\cong\underline{\mathbb{C}}_{X}$, $H_\theta^0(X)=H^0(X,\underline{\mathbb{C}}_{X,\theta})=\mathbb{C}$. By \cite{HR}, Example 1.6, $\theta$ is exact. Inversely, if $\theta=\textrm{d}u$, $\underline{\mathbb{C}}_{X,\theta}=\underline{\mathbb{C}}e^{-u}$, which implies the conclusion.

$(2)$ Locally, $\theta=\textrm{d}u$ and $\mu=\textrm{d}v$. Then,  $\underline{\mathbb{C}}_{X,\theta}=\mathbb{C}e^{-u}$, $\underline{\mathbb{C}}_{X,\mu}=\mathbb{C}e^{-v}$ and $\underline{\mathbb{C}}_{X,\theta+\mu}=\mathbb{C}e^{-u-v}$, locally. Clearly, the products of functions give an isomorphism $\underline{\mathbb{C}}_{X,\theta}\otimes_{\underline{\mathbb{R}}_X}\underline{\mathbb{C}}_{X,\mu}\rightarrow\underline{\mathbb{C}}_{X,\theta+\mu }$.

$(3)$ Locally, $\theta=\textrm{d}u$, $\underline{\mathbb{C}}_{X,\theta}=\mathbb{C}e^{-u}$ and $\underline{\mathbb{C}}_{Y,f^*\theta}=\mathbb{C}e^{-f^*u}$. The pullbacks of functions give an isomorphism $f^{-1}\underline{\mathbb{C}}_{X,\theta}\tilde{\rightarrow} \underline{\mathbb{C}}_{Y,f^*\theta}$ .
\end{proof}

Let $X$ be a smooth manifold and $\theta$, $\mu$ complex closed  $1$-forms on $X$. The wedge product $\alpha\wedge \beta$ defines a \emph{cup product}
\begin{displaymath}
\cup: H^p_{\theta}(X,\mathbb{C})\times H^q_{\mu}(X,\mathbb{C})\rightarrow H^{p+q}_{\theta+\mu}(X,\mathbb{C}).
\end{displaymath}
Similarly, we can define cup products between $H^p_{\theta}(X,\mathbb{C})$ or $H^p_{\theta,c}(X,\mathbb{C})$ and $H^q_{\mu}(X,\mathbb{C})$ or $H^q_{\mu,c}(X,\mathbb{C})$.

Let $f:X\rightarrow Y$ be a smooth map between connected smooth manifolds and $\theta$ a complex closed $1$-form on $Y$. Set $\tilde{\theta}=f^*\theta$ and $r=\textrm{dim}X-\textrm{dim}Y$.

$(i)$ Define \emph{pullback} $f^*:H^*_{\theta}(Y,\mathbb{C})\rightarrow H^*_{\tilde{\theta}}(X,\mathbb{C})$ as $[\alpha]_{\theta}\mapsto [f^*\alpha]_{\tilde{\theta}}$. If $f$ is proper, we can also define $f^*:H^*_{\theta,c}(Y,\mathbb{C})\rightarrow H^*_{\tilde{\theta},c}(X,\mathbb{C})$ in the same way.

$(ii)$ If $X$ and $Y$ are oriented, define \emph{pushout} $f_*:H^*_{\tilde{\theta},c}(X,\mathbb{C})\rightarrow H^{*-r}_{\theta,c}(Y,\mathbb{C})$ as $[T]_{\theta,c}\mapsto [f_*T]_{\tilde{\theta},c}$. Moreover, if $f$ is proper, $f_*:H^*_{\tilde{\theta}}(X,\mathbb{C})\rightarrow H^{*-r}_{\theta}(Y,\mathbb{C})$ is  defined well similarly.

Let $f:X\rightarrow Y$ be a proper smooth map between connected oriented smooth manifolds. If $\mu$ is a closed $1$-forms on $Y$ and  $\tilde{\theta}=f^*\theta$, we have the \emph{projection formula}
\begin{displaymath}\label{pro-f}
f_*(\sigma\cup f^*\tau)=f_*(\sigma)\cup \tau
\end{displaymath}
for $\sigma$ $\in$ $H_{\tilde{\theta}}^*(X,\mathbb{C})$ or $H_{\tilde{\theta},c}^*(X,\mathbb{C})$ and $\tau\in H_{\mu}^*(Y,\mathbb{C})$ or $H_{\mu,c}^*(Y,\mathbb{C})$. We get it easily by $f_*(T\wedge f^*\beta)=f_*T\wedge \beta$, where $T\in\mathcal{D}^{\prime*}(X)$ and $\beta\in\mathcal{A}^*(Y)$.

Recall that a complex manifold $X$ is called \emph{$p$-K\"ahlerian}, if it admits a closed strictly positive $(p,p)$-form $\Omega$ $($\cite{AB}, Definition 1.1, 1.2$)$. For any $p$-dimensional connected complex submanifold $Z$ of a $p$-K\"ahler manifold $X$, $\Omega|_{Z}$ is a volume form on $Z$. We have
\begin{proposition}\label{inj-surj}
Let $f:X\rightarrow Y$ be a proper surjective holomorphic map between connected complex manifolds, and $\theta$ a complex closed $1$-form on $Y$. Set  $r=\emph{dim}_{\mathbb{C}}X-\emph{dim}_{\mathbb{C}}Y$ and $\tilde{\theta}=f^*\theta$. Assume that $X$ is $r$-K\"ahlerian. Then, for any $p$, $f^*:H_\theta^p(Y,\mathbb{C})\rightarrow H_{\tilde{\theta}}^p(X,\mathbb{C})$ is injective and $f_*:H^{p}_{\tilde{\theta}}(X,\mathbb{C})\rightarrow H^{p-2r}_{\theta}(Y,\mathbb{C})$ is surjective. They also hold for the cases of compact supports.
\end{proposition}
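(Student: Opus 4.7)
The plan is to exhibit a cohomology class on $X$ whose pushforward to $Y$ is a nonzero constant, and then use the projection formula to force $f^\ast$ to be injective. The key ingredient is a closed strictly positive $(r,r)$-form $\Omega$ on $X$, guaranteed by the $r$-K\"ahler hypothesis. Note that $\Omega$ is $d$-closed, hence defines a class $[\Omega]_0 \in H^{2r}_0(X,\mathbb{C}) = H^{2r}(X,\mathbb{C})$; it is not $d_{\tilde\theta}$-closed in general, but that will not be needed.

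First I would show that $f_\ast[\Omega]_0$ is a strictly positive constant $c \in H^0_0(Y,\mathbb{C}) \cong \mathbb{C}$. Indeed, $f_\ast\Omega$ is a current on $Y$ of degree $0$, and $d(f_\ast\Omega)=f_\ast(d\Omega)=0$ forces it to be a constant on the connected manifold $Y$. To see the constant is positive, I would invoke generic smoothness of the proper holomorphic surjection $f$: outside a proper analytic subset, $f$ is a submersion with smooth compact $r$-dimensional complex fibers, and at each regular value $y$,
\[
c \;=\; f_\ast\Omega(y) \;=\; \int_{f^{-1}(y)} \Omega \;>\; 0,
\]
because the restriction of a strictly positive $(r,r)$-form to an $r$-dimensional complex submanifold is a positive volume form.

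Next I would apply the projection formula with the first factor untwisted: for any $\alpha \in H^p_\theta(Y,\mathbb{C})$, the form $\Omega \wedge f^\ast\alpha$ is $d_{\tilde\theta}$-closed (using $d\Omega=0$ and $d_{\tilde\theta}(f^\ast\alpha)=0$), so
\[
f_\ast\bigl([\Omega]_0 \cup f^\ast\alpha\bigr) \;=\; f_\ast[\Omega]_0 \cup \alpha \;=\; c\,\alpha \quad\text{in } H^p_\theta(Y,\mathbb{C}).
\]
Injectivity of $f^\ast$ is then immediate: $f^\ast\alpha=0$ forces $c\alpha=0$, hence $\alpha=0$. For surjectivity of $f_\ast$, given any $\beta \in H^{p-2r}_\theta(Y,\mathbb{C})$, the class $\gamma := c^{-1}\,[\Omega]_0 \cup f^\ast\beta \in H^p_{\tilde\theta}(X,\mathbb{C})$ satisfies $f_\ast\gamma=\beta$.

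For the compact-support case the same identities apply verbatim: the classes $f^\ast\alpha$ and $[\Omega]_0 \cup f^\ast\beta$ are supported in $f^{-1}(\operatorname{supp}\alpha)$ and $f^{-1}(\operatorname{supp}\beta)$, which are compact by properness of $f$. The main obstacle is verifying $c>0$ rigorously; this rests on generic smoothness of proper holomorphic surjections together with the fact that strict positivity of $(p,p)$-forms is preserved under restriction to complex submanifolds of the correct dimension.
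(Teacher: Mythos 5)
Your proposal is correct and follows essentially the same route as the paper: take the closed strictly positive $(r,r)$-form $\Omega$, observe that $c=f_*\Omega$ is a closed degree-$0$ current hence a constant, verify $c>0$ by integrating over a smooth fiber above a regular value (the paper cites Sard's theorem where you invoke generic smoothness), and conclude via the projection formula $f_*([\Omega]\cup f^*\tau)=c\,\tau$. The only difference is that you spell out the deduction of injectivity and surjectivity and the compact-support case, which the paper leaves to the reader.
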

\begin{proof}
Let $\Omega$ be a strictly positive closed $(r,r)$-form on $X$.  Then $c=f_*\Omega$ is a closed  current of degree $0$, hence a constant. By Sard's theorem, the set $U$ of regular values of $f$ is nonempty. For any $y\in U$, $X_y=f^{-1}(y)$ is a $r$-dimensional compact complex submanifold, so $c=\int_{X_y}\Omega|_{X_y}>0$ on $U$.
By the projection formula, $f_*([\Omega]\cup f^*\tau)=c\cdot \tau$, where $[\Omega]\in H^{2r}(X,\mathbb{C})$ and $\tau\in H_{\theta}^{p}(Y,\mathbb{C})$ or $H_{\theta,c}^{p}(Y,\mathbb{C})$. It is easily to deduce the conclusion.
\end{proof}

Clearly, any complex manifold is $0$-K\"ahlerian and any K\"ahler manifold $X$ is  $p$-K\"ahlerian for every $p\leq \textrm{dim}_{\mathbb{C}}X$, so we get
\begin{corollary}\label{inj-surj-1}
Let $f:X\rightarrow Y$ be a proper surjective holomorphic map between connected complex manifolds with the same dimensions. Let $\theta$ be a complex closed $1$-form on $Y$ and $\tilde{\theta}=f^*\theta$. Then, for any $p$, $f^*:H_\theta^p(Y,\mathbb{C})\rightarrow H_{\tilde{\theta}}^p(X,\mathbb{C})$ is injective and $f_*:H^{p}_{\tilde{\theta}}(X,\mathbb{C})\rightarrow H^{p}_{\theta}(Y,\mathbb{C})$ is surjective. They also hold for the cases of compact supports.
\end{corollary}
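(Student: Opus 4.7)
The plan is to reduce Corollary \ref{inj-surj-1} directly to Proposition \ref{inj-surj}. Under the hypothesis that $X$ and $Y$ have the same complex dimension, the relative dimension $r = \dim_{\mathbb{C}}X - \dim_{\mathbb{C}}Y$ equals $0$, so the degree shift $p - 2r$ in the proposition collapses to $p$, which matches the target degree in the statement of the corollary. The only nontrivial hypothesis left to verify in order to invoke the proposition is that $X$ be $0$-K\"ahlerian.

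The first step is therefore to check that every connected complex manifold is $0$-K\"ahlerian, as already asserted in the paragraph preceding the corollary. A $(0,0)$-form on $X$ is just a smooth function; a closed $(0,0)$-form on the connected manifold $X$ is a (locally, hence globally) constant function; and such a function is strictly positive in the sense of \cite{AB} exactly when that constant is positive. Taking $\Omega \equiv 1$ thus exhibits a closed strictly positive $(0,0)$-form on any connected $X$, verifying the $0$-K\"ahler hypothesis for free.

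The second step is the application: with $r=0$ and $X$ known to be $0$-K\"ahlerian, Proposition \ref{inj-surj} applies verbatim and yields the injectivity of $f^*\colon H_\theta^p(Y,\mathbb{C})\to H_{\tilde{\theta}}^p(X,\mathbb{C})$ together with the surjectivity of $f_*\colon H^{p}_{\tilde{\theta}}(X,\mathbb{C})\to H^{p}_{\theta}(Y,\mathbb{C})$, and likewise in the compactly supported versions. There is no real obstacle here; the corollary is essentially a remark that the $r$-K\"ahler assumption of the proposition becomes vacuous when $r=0$, so the equidimensional case holds unconditionally.
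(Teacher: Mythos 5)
Your proposal is correct and matches the paper's own argument: the paper derives this corollary from Proposition \ref{inj-surj} by observing that every complex manifold is $0$-K\"ahlerian (so the hypothesis is vacuous) and that $r=0$ in the equidimensional case. Your explicit verification that $\Omega\equiv 1$ is a closed strictly positive $(0,0)$-form just spells out what the paper states as "clearly."
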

\begin{corollary}\label{inj-surj-2}
Let $f:X\rightarrow Y$ be a proper surjective holomorphic map between connected complex manifolds and $\theta$ a complex closed $1$-form on $Y$. Set  $r=\emph{dim}_{\mathbb{C}}X-\emph{dim}_{\mathbb{C}}Y$ and $\tilde{\theta}=f^*\theta$. Assume that $X$ is a K\"ahler manifold. Then, for any $p$, $f^*:H_\theta^p(Y,\mathbb{C})\rightarrow H_{\tilde{\theta}}^p(X,\mathbb{C})$ is injective and $f_*:H^{p}_{\tilde{\theta}}(X,\mathbb{C})\rightarrow H^{p-2r}_{\theta}(Y,\mathbb{C})$ is surjective. They also hold for the cases of compact supports.
\end{corollary}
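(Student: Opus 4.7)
The plan is to reduce the statement directly to Proposition \ref{inj-surj} by verifying that a K\"ahler manifold is automatically $r$-K\"ahlerian for $r=\dim_{\mathbb{C}}X-\dim_{\mathbb{C}}Y$. Concretely, if $\omega$ is a K\"ahler form on $X$, then $\omega^{r}$ is a $\textrm{d}$-closed strictly positive $(r,r)$-form on $X$, provided $r\le \dim_{\mathbb{C}}X$. Since $f:X\to Y$ is surjective we have $\dim_{\mathbb{C}}X\ge \dim_{\mathbb{C}}Y$, hence $0\le r\le \dim_{\mathbb{C}}X$, so this power makes sense and the strict positivity and closedness come for free from the K\"ahler condition. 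This is exactly the observation recorded by the author immediately before the corollary, namely that any K\"ahler manifold is $p$-K\"ahlerian for every $p\le\dim_{\mathbb{C}}X$.

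With this in hand, the hypotheses of Proposition \ref{inj-surj} are satisfied for the given $f$ and $\theta$: $f$ is a proper surjective holomorphic map, $X$ is $r$-K\"ahlerian, and $\tilde{\theta}=f^{*}\theta$. Therefore the proposition gives injectivity of $f^{*}:H_\theta^p(Y,\mathbb{C})\to H_{\tilde{\theta}}^p(X,\mathbb{C})$ and surjectivity of $f_*:H^{p}_{\tilde{\theta}}(X,\mathbb{C})\to H^{p-2r}_{\theta}(Y,\mathbb{C})$, and analogously in the compactly supported case. No new argument is required beyond transcribing the conclusion.

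There is essentially no hard step here; the only thing to watch is the degenerate but admissible case $r=0$, where $\omega^{0}=1$ trivially witnesses $0$-K\"ahlerianity and the statement collapses to Corollary \ref{inj-surj-1}. I would simply write the proof as one sentence invoking Proposition \ref{inj-surj} with the form $\omega^{r}$.
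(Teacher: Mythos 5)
Your proposal is correct and matches the paper's own treatment: the author derives this corollary from Proposition \ref{inj-surj} by exactly the observation that a K\"ahler manifold is $p$-K\"ahlerian for every $p\le\dim_{\mathbb{C}}X$ (with $\omega^{r}$ as the implicit witness), so nothing further is needed.
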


\section{Dolbeault-Morse-Novikov cohomology}
Let $X$ be a $n$-dimensional complex manifold and $\eta$ a $\bar{\partial}$-closed $(0,1)$-form on $X$.  Suppose $\mathcal{A}^{p,q}(X)$ is the space of smooth $(p,q)$-forms on $X$. Define $\bar{\partial}_\eta:\mathcal{A}^{p,q}(X)\rightarrow\mathcal{A}^{p,q+1}(X)$ as follows:
\begin{displaymath}
\bar{\partial}_\eta\alpha=\bar{\partial}\alpha+\eta\wedge\alpha,
\end{displaymath}
for every  $\alpha\in\mathcal{A}^{p,q}(X)$. Clearly, $\bar{\partial}_\eta\circ\bar{\partial}_\eta=0$, so we have a complex
\begin{displaymath}
\xymatrix{
\cdots\ar[r] &\mathcal{A}^{p,q-1}(X)\ar[r]^{\bar{\partial}_\eta}&\mathcal{A}^{p,q}(X)\ar[r]^{\bar{\partial}_\eta}&\mathcal{A}^{p,q+1}(X)\cdots\ar[r]&\cdots
}.
\end{displaymath}
We call its cohomology $H^{p,q}_\eta(X)=H^q(\mathcal{A}^{p,\bullet}(X),\bar{\partial}_\eta)$ \emph{Dolbeault-Morse-Novikov cohomology}. Similarly, we can define \emph{Dolbeault-Morse-Novikov cohomology with compact support} $H^{p,q}_{\eta,c}(X)$. If $\eta=0$, $H^{p,q}_\eta(X)$ is the classical Dolbeault cohomology $H^{p,q}(X)$. Suppose $\mathcal{A}_X^{p,q}$ is the sheaf of germs of  smooth $(p,q)$-forms on $X$. We naturally get a morphism $\bar{\partial}_\eta:\mathcal{A}_X^{p,q}\rightarrow\mathcal{A}_X^{p,q+1}$ of sheaves.
\begin{definition}
We call the kernel of $\bar{\partial}_\eta:\mathcal{A}_X^{p,0}\rightarrow\mathcal{A}_X^{p,1}$ a \emph{weight $\eta$-sheaf of holomorphic $p$-forms}, denoted by $\Omega_{X,\eta}^p$. In particular, $\mathcal{O}_{X,\eta}:=\Omega_{X,\eta}^0$ is called  a weight $\eta$-sheaf of holomorphic functions.
\end{definition}
Locally, by Grothendieck-Poincar\'{e} lemma, $\eta=\bar{\partial}u$ for a smooth complex-valued function $u$, and then, $\bar{\partial}_\eta=e^{-u}\circ \bar{\partial}\circ e^u$. Hence, locally, $\Omega_{X,\eta}^p=e^{-u}\Omega_X^p$, where $\Omega_X^p$ is the sheaf of germs of holomorphic $p$-forms. So $\mathcal{O}_{X,\eta}$ is a locally free sheaf of $\mathcal{O}_X$-modules with rank 1  and
\begin{equation}\label{fundamental}
\Omega_{X,\eta}^p=\Omega_X^p\otimes_{\mathcal{O}_X}\mathcal{O}_{X,\eta}.
\end{equation}
Moreover, we have a soft resolution of $\Omega_{X,\eta}^p$
\begin{displaymath}
\xymatrix{
0\ar[r] &\Omega_{X,\eta}^p\ar[r]^{i} &\mathcal{A}_X^{p,0}\ar[r]^{\bar{\partial}_\eta} &\mathcal{A}_X^{p,q}\ar[r]^{\bar{\partial}_\eta}&\cdots\ar[r]^{\bar{\partial}_\eta}&\mathcal{A}_X^{p,n}\ar[r]&0
}.
\end{displaymath}
Similarly, we can define $\bar{\partial}_\eta$ on the sheaf $\mathcal{D}_X^{\prime p,q}$ of germs of $(p,q)$-currents and have a soft resolution
\begin{displaymath}
\xymatrix{
0\ar[r] &\Omega_{X,\eta}^p\ar[r]^{i} &\mathcal{D}_X^{\prime p,0}\ar[r]^{\bar{\partial}_\eta} &\mathcal{D}_X^{\prime p,1}\ar[r]^{\bar{\partial}_\eta}&\cdots\ar[r]^{\bar{\partial}_\eta}&\mathcal{D}_X^{\prime p,n}\ar[r]&0
}.
\end{displaymath}
So
\begin{displaymath}
H^q(\mathcal{D}^{\prime p,\bullet}(X),\bar{\partial}_\eta)\cong H^{p,q}_\eta(X)\cong H^q(X,\Omega^p_{X,\eta})
\end{displaymath}
and
\begin{displaymath}
H^q(\mathcal{D}_c^{\prime p,\bullet}(X),\bar{\partial}_\eta)\cong H^{p,q}_{\eta,c}(X)\cong H_c^q(X,\Omega^p_{X,\eta}).
\end{displaymath}

Similarly with  Morse-Novikov cohomology, we can define  \emph{pullback} $f^*$, \emph{pushout} $f_*$, \emph{cup product} $\cup$ and  have \emph{projection formulas} on Dolbeault-Morse-Novikov cohomology. Moreover, by the similar proofs of Proposition \ref{inj-surj}, Corollary \ref{inj-surj-1} and \ref{inj-surj-2}, we have
\begin{proposition}
Let $f:X\rightarrow Y$ be a proper surjetive holomorphic map between complex manifolds and $\eta$ a $\bar{\partial}$-closed $(0,1)$-forms on $Y$. Set $r=\emph{dim}_{\mathbb{C}}X-\emph{dim}_{\mathbb{C}}Y$ and $\tilde{\eta}=f^*\eta$. Assume that $X$ is a $r$-K\"{a}hler manifold. Then, for any $p$, $q$, $f^*:H^{p,q}_{\eta}(Y)\rightarrow H^{p,q}_{\tilde{\eta}}(X)$ is injective and $f_*:H^{p,q}_{\tilde{\eta}}(X)\rightarrow H^{p-r,q-r}_{\eta}(Y)$ is surjective. They also hold for the cases of compact supports.
\end{proposition}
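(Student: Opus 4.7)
The plan is to imitate the proof of Proposition \ref{inj-surj} in the bigraded setting, substituting Dolbeault-Morse-Novikov classes for Morse-Novikov ones and exploiting that a strictly positive closed $(r,r)$-form on $X$ yields a class in ordinary Dolbeault cohomology.

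First I would pick a strictly positive closed $(r,r)$-form $\Omega$ on $X$, furnished by the $r$-K\"ahler hypothesis. Since $\Omega$ is $d$-closed, it is a fortiori $\bar{\partial}$-closed, and so represents a class $[\Omega]\in H^{r,r}(X)$ (the untwisted case with weight $0$). Working componentwise on $Y$ if necessary, $c:=f_*\Omega$ is a closed current of bidegree $(0,0)$, hence a locally constant function; by Sard's theorem, for any regular value $y$ the fiber $X_y=f^{-1}(y)$ is a compact complex submanifold of dimension $r$, and $c(y)=\int_{X_y}\Omega|_{X_y}>0$ because $\Omega|_{X_y}$ is a volume form. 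Thus $c$ is a strictly positive constant.

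Next I apply the projection formula for Dolbeault-Morse-Novikov cohomology announced at the end of Section 3. The weight behaves additively, so $[\Omega]\cup f^*\tau\in H^{p,q}_{\tilde{\eta}}(X)$ for any $\tau\in H^{p-r,q-r}_{\eta}(Y)$ (since $0+\tilde{\eta}=\tilde{\eta}$), and the formula reads
\[
f_*\bigl([\Omega]\cup f^*\tau\bigr)=f_*[\Omega]\cup \tau=c\cdot \tau.
\]
From this identity, injectivity of $f^*$ is immediate (if $f^*\tau=0$ then $c\tau=0$, so $\tau=0$), and surjectivity of $f_*$ follows by exhibiting each $\tau$ as $f_*(c^{-1}[\Omega]\cup f^*\tau)$. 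The compactly supported statements use the same identity, since $f$ is proper and therefore preserves the relevant support conditions.

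I do not anticipate a genuine obstacle, because the whole argument is formally identical to the proof of Proposition \ref{inj-surj}. The only point worth a moment's verification is that the current-level identity $f_*(T\wedge f^*\beta)=f_*T\wedge\beta$ does descend to a projection formula on $(\mathcal{A}^{p,\bullet},\bar{\partial}_{\tilde{\eta}})$-cohomology; this follows from $f^*\eta=\tilde{\eta}$ together with $\bar{\partial}_{\tilde{\eta}}\circ f^*=f^*\circ\bar{\partial}_{\eta}$, which is already implicit in the construction of $f^*$ on Dolbeault-Morse-Novikov cohomology in Section 3.
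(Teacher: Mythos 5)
Your proposal is correct and follows essentially the same route as the paper, which proves this proposition by declaring it "similar to the proof of Proposition \ref{inj-surj}": choose a strictly positive closed $(r,r)$-form $\Omega$, observe $c=f_*\Omega$ is a positive constant via Sard's theorem and integration over a regular fiber, and conclude from the projection formula $f_*([\Omega]\cup f^*\tau)=c\cdot\tau$. Your added remarks — that $[\Omega]$ lives in untwisted $H^{r,r}(X)$ so the weights add correctly, and that the current-level identity descends to $\bar{\partial}_{\tilde{\eta}}$-cohomology — are exactly the points the paper leaves implicit.
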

\begin{corollary}
Let $f:X\rightarrow Y$ be a proper surjetive holomorphic map between complex manifolds with the same dimensions. Let $\eta$ be a $\bar{\partial}$-closed $(0,1)$-forms on $Y$ and $\tilde{\eta}=f^*\eta$. Then, for any $p$, $q$, $f^*:H^{p,q}_{\eta}(Y)\rightarrow H^{p,q}_{\tilde{\eta}}(X)$ is injective and $f_*:H^{p,q}_{\tilde{\eta}}(X)\rightarrow H^{p,q}_{\eta}(Y)$ is surjective. They also hold for the cases of compact supports.
\end{corollary}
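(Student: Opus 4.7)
The plan is to derive this corollary directly from the preceding Proposition by specializing to $r=0$. Since $X$ and $Y$ have the same complex dimension, we have $r=\dim_{\mathbb{C}}X-\dim_{\mathbb{C}}Y=0$, so the map $f_*$ in the proposition lands in $H^{p-r,q-r}_{\eta}(Y)=H^{p,q}_{\eta}(Y)$, exactly matching the target in the corollary.

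The only hypothesis of the proposition left to verify is that $X$ is $r$-Kählerian, i.e.\ $0$-Kählerian. This is the trivial instance noted in the paragraph immediately after Corollary \ref{inj-surj-1} (and implicit in the definition recalled before Proposition \ref{inj-surj}): one can take $\Omega=1$, which is a closed strictly positive $(0,0)$-form on any complex manifold. With this choice, the Sard/projection formula argument in the proof of Proposition \ref{inj-surj} goes through verbatim in the holomorphic setting: for $y$ a regular value of $f$, the fiber $X_y$ is a finite set of points (since $r=0$) and $c=f_*1$ is the constant equal to the generic cardinality of $X_y$, hence a nonzero constant.

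Thus I would simply invoke the preceding Proposition with $r=0$ and $\Omega=1$; the conclusion that $f^*:H^{p,q}_{\eta}(Y)\to H^{p,q}_{\tilde{\eta}}(X)$ is injective and $f_*:H^{p,q}_{\tilde{\eta}}(X)\to H^{p,q}_{\eta}(Y)$ is surjective, together with the analogous statement for compact supports, follows immediately. There is no genuine obstacle: the corollary is a direct specialization, and the only thing worth writing out is the one-line remark that every complex manifold is $0$-Kählerian so no additional geometric hypothesis on $X$ is needed.
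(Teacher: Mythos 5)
Your proposal is correct and matches the paper's own route exactly: the paper deduces this corollary from the preceding Proposition by observing that every complex manifold is $0$-K\"ahlerian (take $\Omega=1$), so the case $r=0$ applies verbatim. Nothing further is needed.
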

\begin{corollary}
Let $f:X\rightarrow Y$ be a proper surjetive holomorphic map between complex manifolds and $\eta$ a $\bar{\partial}$-closed $(0,1)$-forms on $Y$. Set $r=\emph{dim}_{\mathbb{C}}X-\emph{dim}_{\mathbb{C}}Y$ and $\tilde{\eta}=f^*\eta$. If $X$ is a K\"{a}hler manifold. Then, for any $p$, $q$, $f^*:H^{p,q}_{\eta}(Y)\rightarrow H^{p,q}_{\tilde{\eta}}(X)$ is injective and $f_*:H^{p,q}_{\tilde{\eta}}(X)\rightarrow H^{p-r,q-r}_{\eta}(Y)$ is surjective. They also hold for the cases of compact supports.
\end{corollary}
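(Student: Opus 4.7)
The plan is to reduce the statement directly to the preceding Proposition. That Proposition already yields the desired injectivity of $f^*$ and surjectivity of $f_*$ (both in the ordinary and compactly supported cases) under the weaker hypothesis that $X$ is merely $r$-Kählerian, so it suffices to check that a Kähler manifold is automatically $r$-Kählerian for every $r$ in the admissible range. This is the same reduction used, in the Morse-Novikov setting, to pass from Proposition \ref{inj-surj} to Corollary \ref{inj-surj-2}.

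Concretely, let $\omega$ denote a Kähler form on $X$. I would take $\Omega = \omega^r$ as the candidate closed strictly positive $(r,r)$-form. It is $d$-closed since $\omega$ is. For strict positivity in the sense of Alessandrini-Bassanelli (\cite{AB}, Definition 1.1, 1.2), I would appeal to the standard fact that the wedge product of two strictly positive forms of types $(p,p)$ and $(q,q)$ is strictly positive of type $(p+q,p+q)$; applied inductively to $\omega$, this gives strict positivity of $\omega^r$ for every $r \leq \emph{dim}_{\mathbb{C}} X$. Thus $X$ is $r$-Kählerian.

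With this verified, invoking the preceding Proposition on the same data $(f,\eta,\tilde{\eta})$ immediately delivers the injectivity of $f^*:H^{p,q}_{\eta}(Y)\rightarrow H^{p,q}_{\tilde{\eta}}(X)$ and the surjectivity of $f_*:H^{p,q}_{\tilde{\eta}}(X)\rightarrow H^{p-r,q-r}_{\eta}(Y)$, for every $p,q$, as well as their compactly supported versions. There is essentially no obstacle here: the content of the corollary lies entirely in the preceding Proposition, and the only auxiliary input is the classical positivity of powers of a Kähler form, which the paper is already using tacitly in the Morse-Novikov analogue.
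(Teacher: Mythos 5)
Your proposal is correct and takes exactly the paper's route: the paper derives this corollary from the preceding Proposition simply by observing that any K\"ahler manifold is $p$-K\"ahlerian for every $p\leq\dim_{\mathbb{C}}X$ (via $\Omega=\omega^{r}$), just as it does for Corollary \ref{inj-surj-2} in the Morse--Novikov setting. One small caution: the blanket claim that the wedge of two strictly positive $(p,p)$- and $(q,q)$-forms is strictly positive is not true in that generality, but the special case you actually use --- wedging repeatedly with the strictly positive $(1,1)$-form $\omega$ preserves strict positivity --- is standard and suffices.
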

\begin{remark}
On de Rham and Dolbeault cohomologies, several particular cases were proved in \cite{W}.
\end{remark}

\section{Dolbeault-Morse-Novikov cohomology via sheaf theory}
\subsection{weight $\eta$-sheaf}
First, we give several properties of weight $\eta$-sheaves of holomorphic functions.
\begin{lemma}\label{lem fun 2}
Let $X$ be a complex manifold and $\theta$ a complex closed $1$-form on $X$. Assume $\theta=\bar{\zeta}+\eta$, where $\zeta$ and $\eta$ are the $(0,1)$-forms on $X$. Then

$(1)$ $\mathcal{O}_{X,\eta}=\mathcal{O}_X\otimes_{\underline{\mathbb{C}}_X}\underline{\mathbb{C}}_{X,\theta}$;

$(2)$ $\mathcal{O}_{X,\eta}$, $\mathcal{O}_{X,\zeta}$ and $\underline{\mathbb{C}}_{X,\theta}$ are subsheaves of $\mathcal{A}^0_{X,\mathbb{C}}$. Moreover, $\mathcal{O}_{X,\eta}\cap\overline{\mathcal{O}_{X,\zeta}}=\underline{\mathbb{C}}_{X,\theta}$, where $\overline{\mathcal{O}_{X,\zeta}}$ is the sheaf of complex conjugation of $\mathcal{O}_{X,\zeta}$ in $\mathcal{A}^0_{X,\mathbb{C}}$.
\end{lemma}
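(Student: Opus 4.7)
The plan is to extract the type decomposition forced by $\textrm{d}\theta=0$, then prove (1) by constructing an explicit multiplication morphism and checking that it is a local isomorphism, and finally prove (2) by decomposing the equation $\textrm{d}_\theta h=0$ by type. Before anything else I would record the following bookkeeping observation: since $\bar\zeta$ has type $(1,0)$ and $\eta$ has type $(0,1)$, the identity $\textrm{d}\theta=0$ splits into $\partial\bar\zeta=0$, $\bar\partial\bar\zeta+\partial\eta=0$, and $\bar\partial\eta=0$; in particular both $\eta$ and $\zeta$ are $\bar\partial$-closed, so $\mathcal{O}_{X,\eta}$ and $\mathcal{O}_{X,\zeta}$ are both defined. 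More importantly, for a smooth function $h$ the bidegree components of $\textrm{d}_\theta h=\textrm{d}h+\theta h$ are $\partial h+\bar\zeta h$ and $\bar\partial h+\eta h$, so $\textrm{d}_\theta h=0$ is equivalent to the pair of equations $\partial h+\bar\zeta h=0$ and $\bar\partial h+\eta h=0$.

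For (1) I would define a morphism
\[
\mu:\mathcal{O}_X\otimes_{\underline{\mathbb{C}}_X}\underline{\mathbb{C}}_{X,\theta}\longrightarrow \mathcal{O}_{X,\eta},\qquad f\otimes h\longmapsto fh,
\]
where the product is taken inside $\mathcal{A}^0_{X,\mathbb{C}}$. To see that $fh\in\mathcal{O}_{X,\eta}$, one computes $\bar\partial_\eta(fh)=h\,\bar\partial f+f(\bar\partial h+\eta h)=f(\bar\partial h+\eta h)$ using $\bar\partial f=0$, and then invokes the $(0,1)$-component of the observation above to get $0$. To verify that $\mu$ is an isomorphism of sheaves I would argue locally: by the smooth Poincar\'e lemma one picks a smooth $v$ with $\theta=\textrm{d}v$ on a small open set, whence $\underline{\mathbb{C}}_{X,\theta}=\mathbb{C}e^{-v}$ by Sec.~2, while $\eta=\bar\partial v$ yields $\mathcal{O}_{X,\eta}=e^{-v}\mathcal{O}_X$ as in Sec.~3. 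Under these local identifications, $\mu$ becomes $f\otimes(ce^{-v})\mapsto(cf)e^{-v}$, which is plainly bijective.

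For (2), the three sheaves are subsheaves of $\mathcal{A}^0_{X,\mathbb{C}}$ directly from their definitions as kernels of first-order differential operators acting on $\mathcal{A}^0_{X,\mathbb{C}}$. For the intersection identity, I would first unpack $\overline{\mathcal{O}_{X,\zeta}}$: conjugating the defining equation $\bar\partial g+\zeta g=0$ of $\mathcal{O}_{X,\zeta}$ gives $\partial\bar g+\bar\zeta\bar g=0$, so $\overline{\mathcal{O}_{X,\zeta}}$ consists exactly of the smooth functions $h$ with $\partial h+\bar\zeta h=0$. Hence $h\in\mathcal{O}_{X,\eta}\cap\overline{\mathcal{O}_{X,\zeta}}$ is equivalent to the conjunction of $\bar\partial h+\eta h=0$ and $\partial h+\bar\zeta h=0$, which by the bidegree observation recorded at the outset is in turn equivalent to $\textrm{d}_\theta h=0$, i.e.\ $h\in\underline{\mathbb{C}}_{X,\theta}$. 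The argument is essentially bookkeeping; no genuine obstacle arises, the only delicate point being the careful tracking of which component of $\theta$ carries which type, and the only use of local data is the smooth primitive $v$ needed for the surjectivity of $\mu$ in (1).
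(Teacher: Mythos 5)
Your proof is correct. For part (1) you take essentially the paper's route: the paper also defines the isomorphism as ``the products of functions'' and checks it locally by writing $\theta=\textrm{d}u$, so that $\underline{\mathbb{C}}_{X,\theta}=\mathbb{C}e^{-u}$ and $\mathcal{O}_{X,\eta}=e^{-u}\mathcal{O}_X$; your extra verification that $fh$ is killed by $\bar\partial_\eta$ is a detail the paper leaves implicit. For part (2) you diverge mildly: the paper again works with the local primitive, observing $\mathcal{O}_{X,\zeta}=e^{-\bar u}\mathcal{O}_X$, hence $\overline{\mathcal{O}_{X,\zeta}}=e^{-u}\overline{\mathcal{O}_X}$, and reduces the intersection to $\mathcal{O}_X\cap\overline{\mathcal{O}_X}=\underline{\mathbb{C}}_X$ after factoring out $e^{-u}$, whereas you argue globally by splitting $\textrm{d}_\theta h=0$ into its $(1,0)$- and $(0,1)$-components $\partial h+\bar\zeta h=0$ and $\bar\partial h+\eta h=0$ and identifying $\overline{\mathcal{O}_{X,\zeta}}$ as the kernel of the first operator by conjugating its defining equation. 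Your version is coordinate-free, needs no primitive for (2), and in effect anticipates the decomposition $\textrm{d}_\theta=\partial_{\bar\zeta}+\bar\partial_\eta$ that the paper only introduces later in Section 4.1; the paper's version is shorter because all three sheaves become visibly $e^{-u}$ times their untwisted counterparts. Both arguments are complete and correct.
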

\begin{proof}
Locally, $\theta=du$, $\zeta=\bar{\partial}\bar{u}$, $\eta=\bar{\partial}u$, hence, $\underline{\mathbb{C}}_{X,\theta}=\mathbb{C}e^{-u}$, $\mathcal{O}_{X,\eta}=e^{-u}\cdot\mathcal{O}_X$ and $\mathcal{O}_{X,\zeta}=e^{-\bar{u}}\cdot\mathcal{O}_X$. Clearly, $\mathcal{O}_{X,\eta}\cap\overline{\mathcal{O}_{X,\zeta}}=\underline{\mathbb{C}}_{X,\theta}$, and the products of functions give an isomorphism $\mathcal{O}_{X}\otimes_{\underline{\mathbb{C}}_X}\underline{\mathbb{C}}_{X,\theta}\rightarrow\mathcal{O}_{X,\eta}$ .
\end{proof}

\begin{lemma}\label{lem fun 3}
Let $X$ be a complex manifold and $\eta$ a $\bar{\partial}$-closed $(0,1)$-form on $X$.

$(1)$ Suppose $\eta$ is $\bar{\partial}$-exact, i.e., there exists $u\in \mathcal{A}_{\mathbb{C}}^0(X)$, such that $\eta=\bar{\partial}u$ . Then
\begin{displaymath}
\mathcal{O}_{X,\eta}\rightarrow\mathcal{O}_X, h\mapsto h\cdot e^{u}
\end{displaymath}
is an isomorphism of sheaves of $\mathcal{O}_X$-modules.

$(2)$ Suppose $\zeta$ is a $\bar{\partial}$-closed $(0,1)$-form on $X$. Then $\mathcal{O}_{X,\zeta}\otimes_{\mathcal{O}_X}\mathcal{O}_{X,\eta}=\mathcal{O}_{X,\zeta+\eta}$. So $(\mathcal{O}_{X,\eta})^{\vee}=\mathcal{O}_{X,-\eta}$, where $(\mathcal{O}_{X,\eta})^{\vee}=\mathcal{H}om_{\mathcal{O}_X}(\mathcal{O}_{X,\eta},\mathcal{O}_X)$ is the dual of $\mathcal{O}_{X,\eta}$ of $\mathcal{O}_X$-modules.

$(3)$ If $f:Y\rightarrow X$ is a holomorphic map of complex manifolds, then
\begin{displaymath}
f^*\mathcal{O}_{X,\eta}=\mathcal{O}_{Y,f^*\eta},
\end{displaymath}
where $f^*\mathcal{O}_{X,\eta}=f^{-1}\mathcal{O}_{X,\eta}\otimes_{f^{-1}\mathcal{O}_X}\mathcal{O}_Y$ is the inverse image sheaf of $\mathcal{O}_Y$-modules.
\end{lemma}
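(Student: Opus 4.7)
The overall plan is to reduce all three parts to the local model $\mathcal{O}_{X,\eta}|_U = e^{-u}\mathcal{O}_X|_U$ that is available on any chart $U \subset X$ on which $\eta = \bar{\partial}u$ (by the Grothendieck-Poincar\'{e} lemma), in complete parallel with the proof of Lemma \ref{lem fun 1}. On such a chart, $\mathcal{O}_{X,\eta}$ is a free $\mathcal{O}_X$-module of rank one generated by $e^{-u}$, and in each part of the lemma the proposed morphism visibly sends the generator on the source to the generator on the target.

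For $(1)$, since $u$ is given globally, the assignment $h \mapsto e^u h$ makes sense on $X$. I would verify $\bar{\partial}(e^u h) = e^u(\bar{\partial}h + \eta h) = e^u \bar{\partial}_\eta h = 0$ whenever $h$ is a local section of $\mathcal{O}_{X,\eta}$, so the map lands in $\mathcal{O}_X$; the $\mathcal{O}_X$-linear map $g \mapsto e^{-u}g$ is a two-sided inverse.

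For $(2)$, multiplication of functions defines a morphism $\mathcal{O}_{X,\zeta}\otimes_{\mathcal{O}_X}\mathcal{O}_{X,\eta} \to \mathcal{O}_{X,\zeta+\eta}$, $s\otimes t \mapsto st$. Locally, writing $\zeta = \bar{\partial}v$ and $\eta = \bar{\partial}u$, both sides become free rank-one $\mathcal{O}_X$-modules with generators $e^{-v}\otimes e^{-u}$ and $e^{-(v+u)}$, and the map sends generator to generator. The duality claim then follows by applying the identity to $\eta$ and $-\eta$: the resulting pairing $\mathcal{O}_{X,\eta}\otimes_{\mathcal{O}_X}\mathcal{O}_{X,-\eta} \cong \mathcal{O}_{X,0} = \mathcal{O}_X$ is perfect because both factors are invertible, so $\mathcal{O}_{X,-\eta} \cong \mathcal{H}om_{\mathcal{O}_X}(\mathcal{O}_{X,\eta},\mathcal{O}_X)$.

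For $(3)$, the candidate morphism is the canonical map $f^{-1}\mathcal{O}_{X,\eta}\otimes_{f^{-1}\mathcal{O}_X}\mathcal{O}_Y \to \mathcal{O}_{Y,f^*\eta}$, $s\otimes g \mapsto (s\circ f)\cdot g$. On a chart $U$ where $\eta = \bar{\partial}u$, one has $f^*\eta = \bar{\partial}(f^*u)$ on $f^{-1}(U)$, so $\mathcal{O}_{Y,f^*\eta}|_{f^{-1}(U)} = e^{-f^*u}\mathcal{O}_Y$, and the map sends the generator $(e^{-u}\circ f)\otimes 1$ to $e^{-f^*u}$. The only non-trivial bookkeeping in the entire lemma is checking that these locally defined isomorphisms glue: if $u'$ is another local primitive of $\eta$ on $U$, then $u - u' \in \mathcal{O}_X(U)$, and the transition factors $e^{u-u'}$ are invertible holomorphic functions, so the generators on both source and target transform by the same $\mathcal{O}_X$-cocycle and the patches agree on overlaps. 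Once this consistency is noted, the three statements follow at once.
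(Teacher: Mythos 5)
Your proposal is correct and follows essentially the same route as the paper: parts (1) and (2) are reduced to the local model $\mathcal{O}_{X,\eta}|_U=e^{-u}\mathcal{O}_X|_U$ exactly as the paper does (mirroring Lemma \ref{lem fun 1}), and for (3) your canonical map $s\otimes g\mapsto (s\circ f)\cdot g$ is the same morphism the paper builds via presheaves and direct limits, verified in both cases by a stalkwise/chartwise generator-to-generator computation. Your extra remark that the local primitives differ by a holomorphic function, so the trivializations patch by the same $\mathcal{O}_X$-cocycle, is a harmless (and correct) elaboration of what the paper leaves implicit.
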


\begin{proof}
We can get $(1)$ $(2)$ immediately with the similar proof of Lemma \ref{lem fun 1}.

$(3)$ For any presheaf $\mathcal{G}$, denote by $\mathcal{G}^+$ the sheaf  associated to  $\mathcal{G}$. Define presheaves $\mathcal{F}$ and $\mathcal{R}$ on $Y$ as
\begin{displaymath}
\mathcal{F}(U)=\mathop{\underrightarrow{\text{lim}}}\limits_{W\supseteq f(U)}\mathcal{O}_{X,\eta}(W)
\end{displaymath}
and
\begin{displaymath}
\mathcal{R}(U)=\mathop{\underrightarrow{\text{lim}}}\limits_{W\supseteq f(U)}\mathcal{O}_{X}(W),
\end{displaymath}
for any open subset $U$ of $Y$. Then $\mathcal{F}^+=f^{-1}\mathcal{O}_{X,\eta}$, $\mathcal{R}^{+}=f^{-1}\mathcal{O}_{X}$ and $(\mathcal{F}\otimes_{\mathcal{R}}\mathcal{O}_Y)^+=f^*\mathcal{O}_{X,\eta}$.

Define $\varphi(U):\mathcal{F}(U)\otimes_{\mathcal{R}(U)}\mathcal{O}_Y(U)\rightarrow \mathcal{O}_{Y,f^*\eta}(U)$ as $[h]\otimes g\mapsto g\cdot(f^*h)|_U$, for every open subset $U$ of $Y$, where $[h]$ is the class of the $\eta$-holomorphic function $h$ under the direct limit. We get  a morphism $\varphi:\mathcal{F}\otimes_{\mathcal{R}}\mathcal{O}_Y\rightarrow \mathcal{O}_{Y,f^*\eta}$ of presheaves, and moreover, induce a morphism $\varphi^+: f^*\mathcal{O}_{X,\eta}\rightarrow \mathcal{O}_{Y,f^*\eta}$ of sheaves.

We claim that $\varphi^+$ is an isomorphism. Actually, for any $y\in Y$, choose a open ball $V$ near $f(y)$, such that $\eta=\bar{\partial}u$ on $V$ for some $u\in \mathcal{A}_{\mathbb{C}}^0(V)$. The elements of $\mathcal{F}_y=(\mathcal{O}_{X,\eta})_{f(y)}$ and $(\mathcal{O}_{Y,f^*\eta})_{y}$  can be written as $[pe^{-u}]$ and  $[qe^{-f^*u}]$ respectively, where $p$, $q$ are holomorphic functions near $f(y)$, $y$  respectively, where $[a]$ denote the the class of $a$ under direct limit. At the stalk over $y$, $\varphi^+_y([pe^{-u}]\otimes[g])=[g\cdot f^*p\cdot e^{-f^*u}]$, which is isomorphic. We complete the proof.
\end{proof}
\begin{remark}
\emph{If $\eta$ is the $(0,1)$-part of a closed $1$-form, Lemma \ref{lem fun 3} (3) can be proved simply by Lemma \ref{lem fun 2} (1).}
\end{remark}

For a complex closed $1$-form $\theta$ on a complex manifold $X$, we write $\theta=\bar{\zeta}+\eta$, where $\zeta$ and $\eta$ are both $(0,1)$-forms. Let $\partial_{\bar{\zeta}}=\partial+\bar{\zeta}\wedge$. Then $\textrm{d}_\theta=\partial_{\bar{\zeta}}+\bar{\partial}_\eta$, $\partial_{\bar{\zeta}}^2=0$, $\bar{\partial}_{\eta}^2=0$, and $\partial_{\bar{\zeta}}\bar{\partial}_{\eta}+\bar{\partial}_{\eta}\partial_{\bar{\zeta}}=0$. Locally, $\theta=\textrm{d}u$, for a smooth complex-valued function $u$. Then $\eta=\bar{\partial}u$, $\bar{\zeta}=\partial u$ and $\partial_{\bar{\zeta}}=e^{-u}\circ \partial\circ e^u$, locally. By the holomorphic de Rham resolution of $\mathbb{C}$, there exists a resolution of $\underline{\mathbb{C}}_{X,\theta}$
\begin{displaymath}
\xymatrix{
0\ar[r] &\underline{\mathbb{C}}_{X,\theta}\ar[r]^{i} &\mathcal{O}_{X,\eta}\ar[r]^{\partial_{\bar{\zeta}}} &\Omega_{X,\eta}^1\ar[r]^{\partial_{\bar{\zeta}}}&\cdots\ar[r]^{\partial_{\bar{\zeta}}}&\Omega_{X,\eta}^n\ar[r]&0
}.
\end{displaymath}
So we can compute Morse-Novikov cohomology by the hypercohomology  $H_\theta^p(X,\mathbb{C})=\mathbb{H}^p(X,\Omega_{X,\eta}^\bullet)$. If $X$ satisfies that $H_{\eta}^{p,q}(X)=0$ for any $p\geq 1, q\geq 0$, then
\begin{displaymath}
H_\theta^p(X,\mathbb{C})=H^p(\Gamma(X,\Omega_{X,\eta}^\bullet),\partial_{\bar{\zeta}}).
\end{displaymath}
In this case, $H_\theta^p(X,\mathbb{C})=0$ for $p>\textrm{dim}_{\mathbb{C}}X$.

\subsection{K\"{u}nneth formula and Serre's duality}
If $\mathcal{F}$ and $\mathcal{G}$ are sheaves of $\mathcal{O}_X$ and $\mathcal{O}_Y$-modules on complex manifolds $X$ and $Y$ respectively. The \emph{cartesian product sheaf} of $\mathcal{F}$ and $\mathcal{G}$ is defined as
\begin{displaymath}
\mathcal{F}\boxtimes\mathcal{G}=pr_1^*\mathcal{F}\otimes_{\mathcal{O}_{X\times Y}}pr_2^*\mathcal{G},
\end{displaymath}
where $pr_1$ and $pr_2$ are projections from $X\times Y$ onto $X$, $Y$, respectively. Assume that $\zeta$ and $\eta$ are $\bar{\partial}$-closed forms on   complex manifolds $X$ and $Y$ respectively. By the formula $($\ref{fundamental}$)$ and Lemma \ref{lem fun 3} $(3)$,
\begin{displaymath}
pr_1^*\Omega_{X,\zeta}^p=pr_1^*\Omega^p_{X}\otimes_{\mathcal{O}_{X\times Y}}\mathcal{O}_{X\times Y,pr_1^*\zeta}
\end{displaymath}
and
\begin{displaymath}
pr_2^*\Omega_{Y,\eta}^q=pr_2^*\Omega^q_{Y}\otimes_{\mathcal{O}_{X\times Y}}\mathcal{O}_{X\times Y,pr_2^*\eta},
\end{displaymath}
hence $\Omega_{X,\zeta}^p\boxtimes\Omega_{Y,\eta}^q=(\Omega_{X}^p\boxtimes\Omega_{Y}^q)\otimes_{\mathcal{O}_{X\times Y}}\mathcal{O}_{X\times Y,\omega}$, where $\omega=pr_1^*\zeta+ pr_2^*\eta$. So
\begin{equation}\label{dir-sum}
\begin{aligned}
\Omega_{X\times Y,\omega}^k=&\Omega_{X\times Y}^k\otimes_{\mathcal{O}_{X\times Y}}\mathcal{O}_{X\times Y,\omega}\\
=&\left(\bigoplus_{p+q=k}\Omega_{X}^p\boxtimes\Omega_{Y}^q\right)\otimes_{\mathcal{O}_{X\times Y}}\mathcal{O}_{X\times Y,\omega}\\
=&\bigoplus_{p+q=k}\Omega_{X,\zeta}^p\boxtimes\Omega_{Y,\eta}^q.
\end{aligned}
\end{equation}

If $X$ or $Y$  is compact, by $($\ref{dir-sum}$)$ and \cite{Dem}, Chap. IX, (5.23) (5.24), we have an isomorphism
\begin{displaymath}
\bigoplus_{p+q=k,r+s=l}H_\zeta^{p,r}(X)\otimes_{\mathbb{C}} H_\eta^{q,s}(Y)\cong H_\omega^{k,l}(X\times Y)
\end{displaymath}
for any $k$, $l$. We call it \emph{K\"{u}nneth formula} for Dolbeault-Morse-Novikov cohomology.

Let $X$ be a connected compact complex manifold of dimension $n$ and $\eta$ a $\bar{\partial}$-closed $(0,1)$-form on $X$.  By Lemma \ref{lem fun 2}, $(2)$ and Serre duality theorem,
\begin{displaymath}
\cup: H_\eta^{p,q}(X)\times H_{-\eta}^{n-p,n-q}(X)\rightarrow \mathbb{C}
\end{displaymath}
is a nondegenerate pair, for $0\leq p,q\leq n$.

\subsection{Bimeromorphic invariants}
We give several bimeromorphic invariants by Dolbeault-Morse-Novikov cohomology.
\begin{proposition}\label{bim}
Let $f:X\dashrightarrow Y$ be a bimeromorphic map of complex manifolds and $\eta_X, \eta_Y$ $\bar{\partial}$-closed $(0,1)$-forms on $X$, $Y$ respectively. Assume that there exist nowhere dense analytic subsets $E\subseteq X$ and $F\subseteq Y$, such that $f:X-E\rightarrow Y-F$ is biholomorphic and $f^*(\eta_Y|_{Y-F})=\eta_X|_{X-E}$. Then, for any $p$,

$(1)$ $H_{\eta_X}^{0,p}(X)\cong H_{\eta_Y}^{0,p}(Y)$ and $H_{\eta_X,c}^{0,p}(X)\cong H_{\eta_Y,c}^{0,p}(Y)$;

$(2)$ $H_{\eta_X}^{p,0}(X)\cong H_{\eta_Y}^{p,0}(Y)$ and $H_{\eta_X,c}^{p,0}(X)\cong H_{\eta_Y,c}^{p,0}(Y)$.
\end{proposition}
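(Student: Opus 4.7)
My plan is to reduce the assertion to the case where the bimeromorphic map is a proper holomorphic morphism by desingularizing the graph, and then to deduce the twisted direct-image identities from their classical untwisted counterparts via Lemma \ref{lem fun 3} (3) and the projection formula. The underlying structural fact is that $\Omega^p_{X,\eta}=\Omega^p_X\otimes_{\mathcal{O}_X}\mathcal{O}_{X,\eta}$ with $\mathcal{O}_{X,\eta}$ a holomorphic line bundle, so tensoring by it commutes with direct images.

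First I would take the closure $\Gamma\subseteq X\times Y$ of the graph of $f$ and apply Hironaka's desingularization to produce a connected complex manifold $Z$ together with proper surjective holomorphic bimeromorphic morphisms $p:Z\to X$ and $q:Z\to Y$ satisfying $q=f\circ p$ on the open dense subset where both are biholomorphic. There $p^*\eta_X=q^*\eta_Y$ by the hypothesis on $\eta_X,\eta_Y$, and since both sides are smooth $(0,1)$-forms on $Z$ agreeing on an open dense set, they coincide globally; write $\tilde\eta$ for this common form. By symmetry it then suffices to prove that for every proper surjective holomorphic bimeromorphic morphism $\pi:\tilde X\to X$ between connected complex manifolds, with $\tilde\eta:=\pi^*\eta$, the pullback induces isomorphisms $H^{p,0}_\eta(X)\tilde{\rightarrow}H^{p,0}_{\tilde\eta}(\tilde X)$ and $H^{0,p}_\eta(X)\tilde{\rightarrow}H^{0,p}_{\tilde\eta}(\tilde X)$, together with their compactly supported analogues.

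By formula (\ref{fundamental}) and Lemma \ref{lem fun 3} (3), $\Omega^p_{\tilde X,\tilde\eta}=\Omega^p_{\tilde X}\otimes_{\mathcal{O}_{\tilde X}}\pi^*\mathcal{O}_{X,\eta}$. Because $\mathcal{O}_{X,\eta}$ is locally free of rank one, the projection formula yields
\[
R^j\pi_*\Omega^p_{\tilde X,\tilde\eta}=(R^j\pi_*\Omega^p_{\tilde X})\otimes_{\mathcal{O}_X}\mathcal{O}_{X,\eta}
\]
for every $j$. Invoking the classical facts $\pi_*\Omega^p_{\tilde X}=\Omega^p_X$ and $R^j\pi_*\mathcal{O}_{\tilde X}=0$ for $j\ge 1$ (valid for any proper bimeromorphic morphism between smooth complex manifolds, since smooth manifolds have rational singularities and since holomorphic $p$-forms extend uniquely across the codimension-$\ge 2$ image of the exceptional set), one obtains
\[
\pi_*\Omega^p_{\tilde X,\tilde\eta}=\Omega^p_{X,\eta},\qquad R^j\pi_*\mathcal{O}_{\tilde X,\tilde\eta}=0\quad(j\ge 1).
\]
Taking global sections of the first identity gives part (2), while the Leray spectral sequence $E_2^{i,j}=H^i(X,R^j\pi_*\mathcal{O}_{\tilde X,\tilde\eta})\Rightarrow H^{i+j}(\tilde X,\mathcal{O}_{\tilde X,\tilde\eta})$ degenerates by the vanishing above and yields part (1). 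The compactly supported analogues follow identically because $\pi$ is proper, so $R\pi_!\simeq R\pi_*$ and the same argument applies with $H^\bullet_c$ in place of $H^\bullet$.

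The main substantive input is the classical pair $\pi_*\Omega^p_{\tilde X}=\Omega^p_X$ and $R^j\pi_*\mathcal{O}_{\tilde X}=0$ ($j\ge 1$) for a proper bimeromorphic morphism between smooth complex manifolds; granted these, the twist by $\mathcal{O}_{X,\eta}$ is purely formal via the projection formula. The only care required in the reduction step is verifying $p^*\eta_X=q^*\eta_Y$ on all of $Z$ rather than just on the biholomorphic locus, which is immediate from continuity.
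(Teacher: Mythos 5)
Your proposal is correct and, for part (1), is essentially identical to the paper's argument: reduce to a proper modification by resolving the graph, check $p^*\eta_X=q^*\eta_Y$ by continuity, use Lemma \ref{lem fun 3} (3) together with the projection formula to write $R^q\pi_*\mathcal{O}_{\tilde X,\tilde\eta}=R^q\pi_*\mathcal{O}_{\tilde X}\otimes_{\mathcal{O}_X}\mathcal{O}_{X,\eta}$, invoke the classical vanishing (the paper cites Ueno, Prop.\ 1.13 and 2.14, where you invoke rational singularities of smooth manifolds --- the same fact), and run the Leray spectral sequence for both ordinary and compactly supported cohomology. For part (2) your packaging differs: you promote the extension statement to a sheaf identity $\pi_*\Omega^p_{\tilde X,\tilde\eta}=\Omega^p_{X,\eta}$ and then read off $H^0$ and $H^0_c$ (the latter using $\pi_*=\pi_!$ for proper $\pi$ and the fact that only $E_2^{0,0}$ contributes in total degree zero --- note you do \emph{not} get, and do not need, vanishing of $R^j\pi_*\Omega^p_{\tilde X}$ for $p\ge 1$, which is false already for a point blow-up of a surface). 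The paper instead works with global sections directly: it shows $f^*$ is an isomorphism on $H^{p,0}_{\eta_Y}(Y)$ via a restriction diagram and the second Riemann continuation theorem, and then handles the compactly supported case by combining $f_*f^*=\mathrm{id}$ (projection formula) with the injectivity of the natural maps $H^0_c\to H^0$. Both routes rest on the same two inputs --- extension of holomorphic $p$-forms across the codimension $\ge 2$ image of the exceptional set, and $R^j\pi_*\mathcal{O}_{\tilde X}=0$ for $j\ge 1$ --- so the difference is one of organization rather than substance; your version is slightly more uniform, while the paper's avoids asserting the full direct-image identity $\pi_*\Omega^p_{\tilde X}=\Omega^p_X$ as a named classical fact and instead reproves the piece it needs.
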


\begin{proof}
We choose two proper modifications $g:Z\rightarrow X$ and  $h:Z\rightarrow Y$ such that there is nowhere dense analytic subset $S$ in $Z$, $E\subseteq g(S)$ and $F\subseteq h(S)$, $g:Z-S\rightarrow X-g(S)$, $h:Z-S\rightarrow Y-h(S)$ are biholomorphic and $fg|_{Z-S}=h|_{Z-S}$. Obviously,
\begin{displaymath}
(g^*\eta_X-h^*\eta_Y)|_{Z-S}=g^*((\eta_X|_{X-E}-f^*(\eta_Y|_{Y-F}))|_{X-g(S)})=0.
\end{displaymath}
By the continuity, $g^*\eta_X=h^*\eta_Y$. Hence, we need only to prove the propostion for the case that $f$ is a proper modification and $f^*\eta_Y=\eta_X$. By \cite{GR2}, page 215, we assume $E=f^{-1}(F)$, $\textrm{codim}_YF\geq 2$ and $\textrm{codim}_XE=1$.

$(1)$ By Lemma \ref{lem fun 3} $(3)$ and \cite{U}, Proposition 1.13, 2.14,
\begin{displaymath}
R^qf_*\mathcal{O}_{X,\eta_X}=R^qf_*\mathcal{O}_X\otimes_{\mathcal{O}_Y}\mathcal{O}_{Y,\eta_Y}=\left\{
 \begin{array}{ll}
\mathcal{O}_{Y,\eta_Y},&~q=0;\\
 &\\
 0,&~otherwise.
 \end{array}
 \right.
\end{displaymath}
Consider Leray spectral sequences,
\begin{displaymath}
E_2^{p,q}=H^p(Y, R^qf_*\mathcal{O}_{X,\eta_X})\Rightarrow H^{p+q}=H^{p+q}(X,\mathcal{O}_{X,\eta_X})
\end{displaymath}
and
\begin{displaymath}
E_2^{p,q}=H_c^p(Y, R^qf_*\mathcal{O}_{X,\eta_X})\Rightarrow H^{p+q}=H_c^{p+q}(X,\mathcal{O}_{X,\eta_X}).
\end{displaymath}
Then $E_2^{p,q}=0$ for $q>0$. Hence $E_2^{p,0}=H^p$. We get $(1)$.

$(2)$ Set $U=X-E$, $V=Y-F$ and $j_U:U\rightarrow X$, $j_V:V\rightarrow Y$ are inclusions.  We have a  commutative diagram
\begin{displaymath}
\xymatrix{
 H^0(Y,\Omega_{Y,\eta_Y}^p)\ar[d]^{j_V^*} \ar[r]^{f^*}& H^0(X,\Omega_{X,\eta_X}^p)\ar[d]^{j_U^*} \\
H^0(V,\Omega_{Y,\eta_Y}^p)       \ar[r]^{(f|_U)^*}& H^0(U,\Omega_{X,\eta_X}^p)     },
\end{displaymath}
By the continuity, the restriction $j_U^*$ is injective. By the second Riemann continuation theorem (\cite{GR1}, p. 133), $j_V^*$ is isomorphic. Since $f|_U$ is biholomorphic, $j_U^*$  is surjective, and then, an isomorphism. So $f^*$ is an isomorphism.

Consider the commutative diagram
\begin{displaymath}
\xymatrix{
 H_c^0(X,\Omega_{X,\eta_X}^p)\ar[d]^{} \ar[r]^{f_*}& H_c^0(Y,\Omega_{Y,\eta_Y}^p)\ar[d]^{} \\
H^0(X,\Omega_{X,\eta_X}^p)       \ar[r]^{f_*}& H^0(Y,\Omega_{Y,\eta_Y}^p) }.
\end{displaymath}
The two vertical maps are inclusions, hence are both injective. We have proven that $f^*:H^{p,0}_{\eta_Y}(Y)\rightarrow H^{p,0}_{\eta_X}(X)$ is an isomorphism. By the projection formula, $f_*f^*=\textrm{id}$ on $H_{\eta_Y}^{p,0}(Y)$. So the map at the bottom is an isomorphism. Then the map at the top is injective. By the projection formula again, $f_*f^*=\textrm{id}$ on $H_{\eta_Y,c}^{p,0}(Y)$, hence $f_*$ is isomorphic on $H_{\eta_X,c}^{p,0}(X)$.
\end{proof}
\begin{remark}
\emph{$H_\theta^1(X,\mathbb{C})$ and $H_{\theta,c}^{2n-1}(X,\mathbb{C})$ are also bimeromorphic invariants, referring to \cite{M1}, Corollary 4.8.}
\end{remark}

\subsection{Leray-Hirsch theorem}
Now we establish the Leray-Hirsch theorem for the Dolbeault-Morse-Novikov cohomology.
\begin{theorem}\label{L-H}
Let $\pi:E\rightarrow X$ be a holomorphic fiber bundle over a connected complex manifold $X$ whose general fiber $F$ is compact and  $\eta$  a $\bar{\partial}$-closed $(0,1)$-form on $X$. Assume there exist classes $e_1,\dots,e_r$ of pure degrees in $ H^{**}(E)$, such that, for every $x\in X$,  their restrictions $e_1|_{E_x},\dots,e_r|_{E_x}$ freely linearly generate $H^{**}(E_x)$. Then, $\pi^*(\bullet)\cup\bullet$ gives isomorphisms of bigraded vector spaces
\begin{displaymath}
H_{\eta}^{**}(X)\otimes_{\mathbb{C}} \emph{span}_{\mathbb{C}}\{e_1, ..., e_r\} \tilde{\rightarrow} H_{\tilde{\eta}}^{**}(E),
\end{displaymath}
where  $\tilde{\eta}=\pi^*\eta$.
\end{theorem}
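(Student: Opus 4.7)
The plan is to prove the Leray--Hirsch isomorphism first as an identification of higher direct image sheaves on $X$, and then pass to cohomology via the Leray spectral sequence. Write each class $e_i$ in its pure bidegree $(a_i,b_i)$ and choose $\bar{\partial}$-closed Dolbeault representatives, still denoted $e_i$. The computation $\bar{\partial}_{\tilde{\eta}}(\pi^*\alpha\wedge e_i)=\pi^*(\bar{\partial}_\eta\alpha)\wedge e_i$ for a $\bar{\partial}_\eta$-closed form $\alpha$ on $X$ shows that the assignment $\alpha\otimes e_i\mapsto \pi^*\alpha\wedge e_i$ descends to a well-defined bigraded map
$$\Phi:\bigoplus_i H^{p-a_i,q-b_i}_\eta(X)\longrightarrow H^{p,q}_{\tilde{\eta}}(E),$$
and the theorem reduces to showing that $\Phi$ is an isomorphism.

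The central step is to compute $R^q\pi_*\Omega^p_{E,\tilde{\eta}}$. Combining (\ref{fundamental}) with Lemma \ref{lem fun 3}(3) yields $\Omega^p_{E,\tilde{\eta}}=\Omega^p_E\otimes_{\mathcal{O}_E}\pi^*\mathcal{O}_{X,\eta}$, so the projection formula for coherent higher direct images (applicable because $\mathcal{O}_{X,\eta}$ is locally free of rank one) gives
$$R^q\pi_*\Omega^p_{E,\tilde{\eta}}\;\cong\;R^q\pi_*\Omega^p_E\otimes_{\mathcal{O}_X}\mathcal{O}_{X,\eta}.$$
Next I would establish the untwisted holomorphic Leray--Hirsch identity at the sheaf level: applying Grauert's base change and semicontinuity to the coherent sheaf $\Omega^p_E$ under the proper holomorphic submersion $\pi$ (so that $R^q\pi_*\Omega^p_E$ is locally free and commutes with fibrewise base change), and then using the hypothesis that the $e_i|_{E_x}$ freely generate $H^{**}(E_x)$, the cup product with the $e_i$ induces a sheaf isomorphism $\bigoplus_{i:\,b_i=q}\Omega^{p-a_i}_X\;\tilde{\rightarrow}\;R^q\pi_*\Omega^p_E$. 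Applying (\ref{fundamental}) once more produces
$$R^q\pi_*\Omega^p_{E,\tilde{\eta}}\;\cong\;\bigoplus_{i:\,b_i=q}\Omega^{p-a_i}_{X,\eta}.$$

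Feeding this into the Leray spectral sequence
$$E_2^{s,t}=H^s(X,R^t\pi_*\Omega^p_{E,\tilde{\eta}})\Longrightarrow H^{p,s+t}_{\tilde{\eta}}(E)$$
gives $E_2^{s,t}=\bigoplus_{i:\,b_i=t}H^{p-a_i,s}_\eta(X)$, which matches the source of $\Phi$ exactly. Because $\Phi$ is built from the very same classes $e_i$ that implement the sheaf-level isomorphism, it respects the Leray filtration on the abutment and induces an isomorphism on each associated graded piece $E_\infty^{s,t}\subseteq E_2^{s,t}$; a dimension count then forces every differential $d_r$, $r\geq 2$, to vanish, and $\Phi$ is promoted from a graded isomorphism to the stated isomorphism of bigraded vector spaces.

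The main obstacle is the untwisted sheaf-level step: one must carefully invoke Grauert's direct image theorem to secure local freeness and fibrewise base change for $R^q\pi_*\Omega^p_E$, so that the pointwise freeness hypothesis on the $e_i$ can be promoted to a global sheaf isomorphism. Once that is in place, the twisting by $\mathcal{O}_{X,\eta}$ is formal via the projection formula, and the explicit lift $\Phi$ kills any Leray differentials, finishing the proof.
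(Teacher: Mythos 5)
Your route---a relative Leray--Hirsch isomorphism at the level of higher direct images, fed into the Leray spectral sequence---is genuinely different from the paper's, which instead reduces to the untwisted statement of \cite{M2} on Stein open sets (where $H^{0,1}(U)=0$ forces $\eta=\bar{\partial}u$, so $\mathcal{O}_{U,\eta}\cong\mathcal{O}_U$) and then globalizes by a Mayer--Vietoris induction over a countable cover by finite unions of Stein sets, using the Five Lemma. However, your central step has a genuine gap as written. You want to deduce the sheaf isomorphism $\bigoplus_{i:\,b_i=q}\Omega^{p-a_i}_X\tilde{\rightarrow}R^q\pi_*\Omega^p_E$ from Grauert's base change plus the hypothesis that the $e_i|_{E_x}$ form a basis of $H^{*,*}(E_x)$. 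But base change compares $(R^q\pi_*\Omega^p_E)\otimes k(x)$ with $H^q\bigl(E_x,\Omega^p_E\otimes_{\mathcal{O}_E}\mathcal{O}_{E_x}\bigr)$, and $\Omega^p_E\otimes\mathcal{O}_{E_x}$ is \emph{not} $\Omega^p_{E_x}$: by the conormal sequence it is filtered with graded pieces $(\Omega^{p-k}_{E_x})^{\oplus\binom{n}{k}}$, $n=\dim_{\mathbb{C}}X$, so $H^q(E_x,\Omega^p_E|_{E_x})\neq H^{p,q}(E_x)$ and the pointwise hypothesis does not plug in. To repair this you would pass to a trivializing cover, use $\Omega^p_{U\times F}=\bigoplus_{k+l=p}pr_1^*\Omega^k_U\otimes pr_2^*\Omega^l_F$, and show that wedging with the representatives $t_i$ is upper triangular with invertible diagonal for the base-degree filtration---i.e.\ essentially reprove the untwisted Leray--Hirsch theorem, which is precisely the nontrivial input the paper imports from \cite{M2}.

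Second, your degeneration argument does not work for the stated generality: $X$ is an arbitrary connected complex manifold, so the groups $H^{p,q}_\eta(X)$ may be infinite dimensional and ``a dimension count forces every differential to vanish'' proves nothing; moreover $E_\infty^{s,t}$ is a subquotient of $E_2^{s,t}$, not a subspace. The standard fix is to realize $\Phi$ as a morphism of filtered complexes from $\bigoplus_i\mathcal{A}^{p-a_i,\bullet-b_i}(X)$ (trivially filtered) to $\mathcal{A}^{p,\bullet}(E)$ with a filtration computing the Leray spectral sequence, check that it induces your isomorphism on $E_2$, and conclude it is an isomorphism on $E_\infty$ and hence on the abutments---no dimension count needed, but the compatibility of the explicit wedge map with the Leray filtration must actually be verified, which you have not done. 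Both gaps are repairable, but they are exactly where the content of the theorem lives; the paper's Mayer--Vietoris induction avoids spectral sequences entirely at the cost of a longer covering argument.
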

\begin{proof}
If $X$ is a Stein manifold, the theorem holds. Actually, since $H^{0,1}(X)=0$, $\eta$ is $\bar{\partial}$-exact. By (\ref{fundamental}) and Lemma \ref{lem fun 3} $(1)$, we may assume $\eta=0$. It is exactly \cite{M2}, Theorem 1.2.

Go back to the general case. Let $t_1, ..., t_r$ be forms of pure degrees in $\mathcal{A}^{**}(E)$, such that $e_i=[t_i]$ for $1\leq i\leq r$. Set $L^{*,*}=\textrm{span}_{\mathbb{C}}\{t_1, ..., t_r\}$, which is a bigraded vector spaces and isomorphic to $\textrm{span}_{\mathbb{C}}\{e_1, ..., e_r\}$. For any open set $U$ in $X$, set
\begin{displaymath}
B^{p,q}(U)=\bigoplus_{k+l=p,u+v=q}\mathcal{A}^{k,u}(U)\otimes_{\mathbb{C}} L^{l,v}
\end{displaymath}
and $\bar{\partial}_B=\bar{\partial}_{\eta}\otimes 1$. For any $p$, $(B^{p,\bullet}(U),\bar{\partial}_B)$ is a complex, whose cohomology is
\begin{displaymath}
\begin{aligned}
D^{p,q}(U)=&\left(H_{\eta}^{*,*}(U)\otimes_{\mathbb{C}}\textrm{span}_{\mathbb{C}}\{e_1, ..., e_r\}\right)^{p,q}\\
=&\bigoplus_{k+l=p,u+v=q}H_{\eta}^{k,u}(U)\otimes_{\mathbb{C}}(\textrm{span}_{\mathbb{C}}\{e_1, ..., e_r\})^{l,v}.
\end{aligned}
\end{displaymath}
Clearly, the morphism $\pi^*(\bullet)\wedge\bullet:B^{p,\bullet}(U)\rightarrow C^{p,\bullet}(U):=\mathcal{A}^{p,\bullet}(E_U)$ of complexes induces a morphism on the cohomological level
\begin{displaymath}
\pi^*(\bullet)\cup\bullet:D^{p,q}(U)\rightarrow E^{p,q}(U):=H_{\tilde{\eta}}^{p,q}(E_U),
\end{displaymath}
denoted by $\Phi_U$. We need to prove $\Phi_X$ is an isomorphism.

Given $p$, for any open subsets $U$, $V$ in $X$, there is a commutative diagram of  complexes
\begin{displaymath}
\xymatrix{
0\ar[r]&B^{p,\bullet}(U\cup V)\ar[d]^{\pi^*(\bullet)\wedge\bullet}\quad \ar[r]^{(\rho_U^{U\cup V},\rho_V^{U\cup V})\quad\quad} & \quad B^{p,\bullet}(U)\oplus B^{p,\bullet}(V)\ar[d]^{(\pi^*(\bullet)\wedge\bullet,\pi^*(\bullet)\wedge\bullet)}\quad\ar[r]^{\quad\quad\rho_{U\cap V}^U-\rho_{U\cap V}^V}& \quad B^{p,\bullet}(U\cap V) \ar[d]^{\pi^*(\bullet)\wedge\bullet}\ar[r]& 0\\
0\ar[r]&C^{p,\bullet}(U\cup V)     \quad  \ar[r]^{(j_U^{U\cup V},j_V^{U\cup V})\quad\quad}& \quad C^{p,\bullet}(U)\oplus C^{p,\bullet}(V)    \quad\ar[r]^{\quad\quad j_{U\cap V}^U-j_{U\cap V}^ V} & \quad C^{p,\bullet}(U\cap V)     \ar[r]& 0 },
\end{displaymath}
where $\rho$, $j$ are restrictions and the differentials of complexes in the first, second rows are all $\bar{\partial}_B$, $\bar{\partial}$, respectively. The two rows are both exact sequences of complexes. Therefore, we have a commutative diagram of long exact sequences
\begin{displaymath}
\tiny{\xymatrix{
    \cdots\ar[r]&D^{p,q-1}(U\cap V)\ar[d]^{\Phi_{U\cap V}}\ar[r]&D^{p,q}(U\cup V) \ar[d]^{\Phi_{U\cup V}} \ar[r]& D^{p,q}(U)\oplus D^{p,q}(V) \ar[d]^{(\Phi_U,\Phi_V)}\ar[r]&  D^{p,q}(U\cap V)\ar[d]^{\Phi_{U\cap V}}\ar[r]&\cdots\\
 \cdots\ar[r]&E^{p,q-1}(U\cap V)    \ar[r] & E^{p,q}(U\cup V)\ar[r]&E^{p,q}(U)\oplus E^{p,q}(V)       \ar[r]& E^{p,q}(U\cup V)   \ar[r]&\cdots     .}}
\end{displaymath}
If $\Phi_U$, $\Phi_V$ and $\Phi_{U\cap V}$ are isomorphisms, then $\Phi_{U\cup V}$ is an isomorphism by Five Lemma (seeing \cite{I}, p. 6).  We claim that:

$(\ast)$ For open subsets $U_1,\ldots,U_s\subseteq X$, if $\Phi_{U_{i_1}\cap\ldots \cap U_{i_k}}$ is an isomorphism for any $1\leq k\leq s$ and $1\leq i_1<\ldots<i_k\leq s$, then $\Phi_{\bigcup_{i=1}^s U_i}$ is an isomorphism.

We prove this conclusion by induction. For $r=1$, the conclusion holds clearly.  Suppose it holds for $s$. For $s+1$, set $U'_1=U_1,\ldots, U'_{s-1}=U_{s-1}, U'_s=U_s\cup U_{s+1}$. Then $\Phi_{U'_{i_1}\cap\ldots \cap U'_{i_k}}=\Phi_{U_{i_1}\cap\ldots \cap U_{i_k}}$  is isomorphic for any $1\leq i_1<\ldots<i_k\leq s-1$. Moreover, $\Phi_{U'_{i_1}\cap\ldots \cap U'_{i_{k-1}}\cap U'_s}$  is also isomorphic for any $1\leq i_1<\ldots<i_{k-1}\leq s-1$, since $\Phi_{U_{i_1}\cap\ldots \cap U_{i_{k-1}}\cap U_s}$, $\Phi_{U_{i_1}\cap\ldots \cap U_{i_{k-1}}\cap U_{s+1}}$ and  $\Phi_{U_{i_1}\cap\ldots \cap U_{i_{k-1}}\cap U_s\cap U_{s+1}}$  are isomorphic. By inductive hypothesis, $\Phi_{\bigcup_{i=1}^{s+1} U_i}=\Phi_{\bigcup_{i=1}^{s} U'_i}$ is an isomorphism. We proved $(\ast)$.

For a disjoint union $U=\bigcup U_\alpha$ of open subsets $U_\alpha$ in $X$, $\Phi_U$ is exactly the direct product
\begin{displaymath}
\prod\Phi_{U_\alpha}:\prod D^{p,q}(U_\alpha)\rightarrow \prod H_{\tilde{\eta}}^{p,q}(E_{U_\alpha}).
\end{displaymath}
If $\Phi_{U_\alpha}$ are all isomorphic, then $\Phi_U$ is also an isomorphism.

Let $\mathcal{U}$ be a basis for topology of $X$ such that every $U\in \mathcal{U}$ is Stein and let $\mathcal{U}_\mathfrak{f}$ be  the collection of the finite unions of open sets in  $\mathcal{U}$.

For any finite intersection $V$ of open sets in $\mathcal{U}_\mathfrak{f}$, $\Phi_{V}$ is an isomorphism.  Actually, $V=\bigcap_{i=1}^s U_i$, where $U_i=\bigcup_{j=1}^{r_i}U_{ij}$ and $U_{ij}\in\mathcal{U}$. Then $V=\bigcup_{J\in\Lambda}U_J$, where $\Lambda=\{J=(j_1,...,j_s)|1\leq j_1\leq r_1,\ldots,1\leq j_s\leq r_s\}$ and $U_J=U_{1j_1}\cap...\cap U_{sj_s}$. For any $J_1,\ldots,J_t\in\Lambda$, $U_{J_1}\cap \ldots\cap U_{J_t}$ is a Stein manifold, so $\Phi_{U_{J_1}\cap \ldots\cap U_{J_t}}$ is isomorphic. By $(\ast)$, $\Phi_{V}=\Phi_{\bigcup_{J\in\Lambda}U_J}$ is an isomorphism.

By \cite{GHV}, p. 16, Prop. II, $X=V_1\cup...\cup V_l$, where $V_i$ is a countable disjoint union of open sets in  $\mathcal{U}_\mathfrak{f}$. For any $1\leq i_1<\ldots<i_k\leq l$, $V_{i_1}\cap \ldots \cap V_{i_k}$ is a disjoint union of the finite intersection of open sets in $\mathcal{U}_\mathfrak{f}$. Hence, $\Phi_{V_{i_1}\cap \ldots \cap V_{i_k}}$ is isomorphic, so  is $\Phi_{X}$ by $(\ast)$. We complete the proof.
\end{proof}

In particular, we can calculate the Dolbeault-Morse-Novikov cohomology of projectivized bundles.
\begin{corollary}\label{poj-D}
Let $\pi:\mathbb{P}(E)\rightarrow X$ be the projectivization of a holomorphic vector bundle $E$ on a connected complex manifold $X$. Assume $\eta$ is a $\bar{\partial}$-closed $(0,1)$-form on $X$ and $h=[\frac{i}{2\pi}\Theta(\mathcal{O}_{\mathbb{P}(E)}(-1))]$ is in $H^{1,1}({\mathbb{P}(E)})$, where $\mathcal{O}_{\mathbb{P}(E)}(-1)$ is the universal line bundle  on ${\mathbb{P}(E)}$ and  $\Theta(\mathcal{O}_{\mathbb{P}(E)}(-1))$ is the Chern curvature of a hermitian metric on $\mathcal{O}_{\mathbb{P}(E)}(-1)$. Then $\pi^*(\bullet)\cup\bullet$ gives an isomorphism of graded vector spaces
\begin{displaymath}
H_{\eta}^{*,*}(X)\otimes_{\mathbb{C}}\emph{span}_{\mathbb{C}}\{1,...,h^{r-1}\}\tilde{\rightarrow}H_{\tilde{\eta}}^{*,*}(\mathbb{P}(E)),
\end{displaymath}
where $\emph{rank}_{\mathbb{C}}E=r$ and $\tilde{\eta}=\pi^*\eta$.
\end{corollary}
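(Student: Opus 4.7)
The plan is to deduce this as a direct application of the Leray-Hirsch theorem (Theorem \ref{L-H}) with the $r$ classes $e_i := h^{i-1}$, $i = 1,\ldots,r$. I would verify the three hypotheses of Theorem \ref{L-H} in turn.

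First, $\pi:\mathbb{P}(E)\to X$ is by construction a holomorphic fiber bundle whose fiber at every point $x\in X$ is $\mathbb{P}(E_x)\cong \mathbb{P}^{r-1}$, which is compact. Second, since $h\in H^{1,1}(\mathbb{P}(E))$ by assumption, each $e_i = h^{i-1}$ lies in $H^{i-1,i-1}(\mathbb{P}(E))$ and is therefore of pure bidegree, so it gives a well-defined class in the total cohomology.

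The substantive step is the third hypothesis: that at each $x\in X$ the restrictions $e_1|_{F_x},\ldots,e_r|_{F_x}$ freely linearly generate $H^{**}(F_x)$, where $F_x := \mathbb{P}(E_x)$. Let $j_x:F_x\hookrightarrow \mathbb{P}(E)$ denote the fiber inclusion. By construction of the universal line bundle on a projectivized bundle, $j_x^*\mathcal{O}_{\mathbb{P}(E)}(-1)=\mathcal{O}_{F_x}(-1)$, and functoriality of the Chern curvature then gives $h|_{F_x} = c_1(\mathcal{O}_{F_x}(-1))\in H^{1,1}(F_x)$. The Dolbeault cohomology of $\mathbb{P}^{r-1}$ is classically one-dimensional in bidegrees $(i,i)$ for $0\leq i\leq r-1$ and vanishes elsewhere, with generators given by the powers of $c_1(\mathcal{O}(-1))$; hence $\{1,\,h|_{F_x},\,\ldots,\,(h|_{F_x})^{r-1}\}$ is a basis for $H^{**}(F_x)$.

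With all hypotheses verified, Theorem \ref{L-H} immediately yields the stated isomorphism
\begin{displaymath}
H_{\eta}^{*,*}(X)\otimes_{\mathbb{C}}\textrm{span}_{\mathbb{C}}\{1,\ldots,h^{r-1}\} \tilde{\rightarrow} H_{\tilde{\eta}}^{*,*}(\mathbb{P}(E)).
\end{displaymath}
I do not anticipate any genuine technical obstacle: the corollary is essentially a packaging of Leray-Hirsch together with the classical fact that $H^{**}(\mathbb{P}^{r-1})$ is freely generated by powers of the tautological Chern class, with the compatibility $j_x^*\mathcal{O}_{\mathbb{P}(E)}(-1)=\mathcal{O}_{F_x}(-1)$ being the only point that requires a brief mention.
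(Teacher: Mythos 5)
Your proposal is correct and follows exactly the route the paper intends: the corollary is stated as an immediate consequence of Theorem \ref{L-H} applied with $e_i=h^{i-1}$, using that each fiber is a compact $\mathbb{P}^{r-1}$ whose Dolbeault cohomology is freely generated by the restrictions of the powers of $h$. The paper gives no further argument, so your verification of the hypotheses (including the compatibility $j_x^*\mathcal{O}_{\mathbb{P}(E)}(-1)=\mathcal{O}_{F_x}(-1)$) matches and slightly elaborates the intended proof.
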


\subsection{A blow-up formula}
We have the following lemma by definition.
\begin{lemma}[\cite{M2}, Proposition 3.1]\label{com}
Let $X$ be a complex manifold and $Z$, $U$ closed, open complex submanifolds of $X$, respectively. Assume $i:Z\rightarrow X$, $j:U\rightarrow X$, $i':Z\cap U\rightarrow U$ and $j':Z\cap U\rightarrow Z$ are inclusions. Then $i'_*j'^*=j^*i_*$ on $\mathcal{D}^{\prime**}(Z)$.
\end{lemma}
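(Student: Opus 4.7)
The plan is to test the asserted equality of currents against an arbitrary compactly supported smooth form $\beta$ on $U$ of the complementary bidegree, thereby reducing the lemma to an elementary identity of smooth forms on $Z$. For $T\in\mathcal{D}^{\prime**}(Z)$, I would first unpack the duality definitions of the four operators in play: pushforward along a closed embedding is dual to pullback of smooth test forms, while pullback along an open embedding is dual to extension-by-zero $j_!$ of compactly supported test forms. Applying these adjunctions on both sides of the claimed equality gives
\begin{align*}
\langle j^*i_*T,\beta\rangle &= \langle i_*T,\, j_!\beta\rangle = \langle T,\, i^*(j_!\beta)\rangle,\\
\langle i'_*j'^*T,\beta\rangle &= \langle j'^*T,\, (i')^*\beta\rangle = \langle T,\, j'_!((i')^*\beta)\rangle.
\end{align*}
Hence the lemma is equivalent to the pointwise identity $i^*(j_!\beta) = j'_!((i')^*\beta)$ of compactly supported smooth forms on $Z$.

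For this reduced identity I would argue directly from the description of $j_!$: the form $j_!\beta$ equals $\beta$ on $U$ and vanishes on $X\setminus U$, so its pullback to the closed submanifold $Z$ equals $\beta|_{Z\cap U} = (i')^*\beta$ on $Z\cap U$ and vanishes on $Z\setminus U = Z\setminus (Z\cap U)$. But that is precisely $j'_!((i')^*\beta)$, because $Z\cap U$ is open in $Z$ and $(i')^*\beta$ has compact support therein, inherited from the compactness of $\mathrm{supp}(\beta)$ in $U$. The compact-support condition also guarantees smoothness of all the extensions-by-zero involved, so the formal pairings above are all legitimate.

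I do not anticipate a serious obstacle: once one fixes conventions for the bidegree shift under pushforward along the closed embedding $i$ (by $(r,r)$ with $r=\mathrm{codim}_{\mathbb{C}}Z$) and for the extension-by-zero $j_!$ on open embeddings, every step above is immediate from the definitions. The only care required is notational bookkeeping to ensure that both sides are paired against test forms of matching bidegree on $U$ and that $\mathrm{supp}(\beta)\subset U$ indeed forces $\mathrm{supp}((i')^*\beta)$ to be compactly contained in $Z\cap U$, so that $j'_!$ applies.
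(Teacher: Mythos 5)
Your proof is correct, and it takes the same route the paper intends: the paper dismisses this as holding ``by definition'' (deferring to [M2, Proposition 3.1]), and your computation is precisely that unwinding --- pairing against a test form $\beta$ with compact support in $U$, using the adjunctions $\langle j^*S,\beta\rangle=\langle S,j_!\beta\rangle$ and $\langle i_*T,\alpha\rangle=\langle T,i^*\alpha\rangle$, and reducing to the evident identity $i^*(j_!\beta)=j'_!((i')^*\beta)$ of smooth forms on $Z$. The one point worth making explicit, which you do, is that $\mathrm{supp}((i')^*\beta)$ is compact in $Z\cap U$ so that all extensions by zero remain smooth.
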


Let $\pi:\widetilde{X}\rightarrow X$ be the blow-up of a connected complex manifold $X$ along a connected complex submanifold $Z$. We know $\pi|_E:E=\pi^{-1}(Z)\rightarrow Z$ is the projectivization $E=\mathbb{P}(N_{Z/X})$ of the normal bundle $N_{Z/X}$. Set
\begin{equation}\label{chern}
h=[\frac{i}{2\pi}\Theta(\mathcal{O}_E(-1))]
\end{equation}
in $H_{\bar{\partial}}^{1,1}(E)$, where $\Theta(\mathcal{O}_E(-1))$ is the curvature of the Chern connection of a hermitian metric of the universal line bundle $\mathcal{O}_E(-1)$ on $E$.
\begin{theorem}\label{main}
With above notations, let $i_E:E\rightarrow\widetilde{X}$ be the inclusion  and $r=\emph{codim}_{\mathbb{C}}Z$. Suppose that $\eta$ is a $\bar{\partial}$-closed $(0,1)$-form on $X$   and $\tilde{\eta}=\pi^*\eta$. Then, for any $p$, $q$,
\begin{equation}\label{b-u-m}
\pi^*+\sum_{i=0}^{r-2}(i_E)_*\circ (h^i\cup)\circ (\pi|_E)^*
\end{equation}
gives an isomorphism
\begin{displaymath}
H_{\eta}^{p,q}(X)\oplus \bigoplus_{i=0}^{r-2}H_{\eta|_Z}^{p-1-i,q-1-i}(Z)\tilde{\rightarrow} H_{\tilde{\eta}}^{p,q}(\widetilde{X}).
\end{displaymath}
\end{theorem}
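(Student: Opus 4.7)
The plan is to emulate the Mayer-Vietoris argument used in the proof of Theorem \ref{L-H}: first verify the map is an isomorphism on open subsets where $\eta$ is $\bar{\partial}$-exact, and then globalize. For any open $U \subseteq X$, set $\widetilde{U} = \pi^{-1}(U)$, $Z_U = Z \cap U$, $E_U = E \cap \widetilde{U}$, and let $\Phi_U$ denote the map defined by the formula (\ref{b-u-m}) applied to the blow-up $\pi|_{\widetilde{U}}:\widetilde{U} \to U$ along $Z_U$. The goal is to show $\Phi_X$ is an isomorphism.

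Suppose first that $\eta|_U = \bar{\partial}u$ for some $u \in \mathcal{A}_{\mathbb{C}}^0(U)$. Then $\tilde{\eta}|_{\widetilde{U}} = \bar{\partial}(\pi^*u)$ and $\eta|_{Z_U} = \bar{\partial}(u|_{Z_U})$, so by Lemma \ref{lem fun 3}$(1)$ together with (\ref{fundamental}), multiplication by the appropriate exponential induces isomorphisms of the weighted complexes with their ordinary Dolbeault counterparts:
\begin{displaymath}
H_{\eta}^{p,q}(U) \cong H^{p,q}(U), \quad H_{\tilde{\eta}}^{p,q}(\widetilde{U}) \cong H^{p,q}(\widetilde{U}), \quad H_{\eta|_{Z_U}}^{p,q}(Z_U) \cong H^{p,q}(Z_U).
\end{displaymath}
I would then check that under these isomorphisms $\Phi_U$ corresponds to the classical blow-up map for ordinary Dolbeault cohomology. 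The relevant identities are $\pi^*(e^u \alpha) = e^{\pi^*u} \pi^*\alpha$, $(\pi|_E)^*(e^{u|_{Z_U}}\beta) = e^{\pi^*u|_{E_U}}(\pi|_E)^*\beta$, commutation of cup product with a fixed $\bar{\partial}$-closed representative of $h^i$ with these multiplications, and the projection-formula identity $(i_E)_*(e^{\pi^*u|_{E_U}}\gamma) = e^{\pi^*u}(i_E)_*\gamma$ for currents $\gamma$ on $E_U$. The classical blow-up map is an isomorphism by \cite{M2}, so $\Phi_U$ is an isomorphism whenever $\eta|_U$ is $\bar{\partial}$-exact, and in particular on every Stein open $U$.

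To globalize, I would apply Mayer-Vietoris exactly as in the proof of Theorem \ref{L-H}. The standard short exact sequences of sheaf complexes on $X$, $Z$, and $\widetilde{X}$ yield compatible Mayer-Vietoris long exact sequences for $H_{\eta}^{p,\bullet}$, $H_{\eta|_Z}^{p,\bullet}$, and $H_{\tilde{\eta}}^{p,\bullet}$. Functoriality of $\pi^*$, $(\pi|_E)^*$ and cup product with $h^i$, combined with Lemma \ref{com} for the compatibility of $(i_E)_*$ with restriction, ensures that $\Phi$ defines a morphism between these long exact sequences. The Five Lemma then propagates the isomorphism property from $\Phi_U, \Phi_V, \Phi_{U\cap V}$ to $\Phi_{U\cup V}$, and $\Phi$ is evidently compatible with disjoint unions. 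Running the inductive covering argument $(\ast)$ from the proof of Theorem \ref{L-H} on a basis of Stein opens of $X$ (whose finite intersections remain Stein) reduces the problem to the exact-$\eta$ case, giving that $\Phi_X$ is an isomorphism.

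The main obstacle is the compatibility verification in the local case: one must confirm that each of the four building blocks of $\Phi$ (pullback along $\pi$, cup with $h^i$, pushforward along $i_E$, and restriction to $Z$) intertwines correctly with the $e^u$-multiplication used to trivialize the weighted sheaves. Pullback and restriction are immediate; the delicate step is the projection-formula identity for $(i_E)_*$, which is what allows $e^{\pi^*u}$ to be transported across the pushforward. Once this reduction is in place, the Mayer-Vietoris globalization is purely formal and mirrors the template of Theorem \ref{L-H} step by step.
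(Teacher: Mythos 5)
Your proposal follows essentially the same route as the paper: reduce the Stein/$\bar\partial$-exact local case to the $\eta=0$ blow-up formula of \cite{M2} via the $e^u$-trivializations of Lemma \ref{lem fun 3}(1) and (\ref{fundamental}), then globalize by the Mayer-Vietoris/Five-Lemma induction $(\ast)$ from Theorem \ref{L-H}, using Lemma \ref{com} and a cochain-level (current-valued) realization of the map so that it intertwines the long exact sequences. The paper packages the source as a single sheaf complex $\mathcal{F}^{p,\bullet}=\mathcal{A}_X^{p,\bullet}\oplus\bigoplus_i i_{Z*}\mathcal{A}_Z^{p-1-i,\bullet-1-i}$ on $X$ rather than summing separate Mayer-Vietoris sequences, but this is only a cosmetic difference.
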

\begin{proof}
For a Stein manifold $X$, we may assume $\eta=0$ with the same reason with the proof of Theorem \ref{L-H}, so the theorem holds by \cite{M2}, Theorem 1.3.

For the general complex manifold $X$, set
\begin{displaymath}
\mathcal{F}^{p,q}=\mathcal{A}_X^{p,q}\oplus \bigoplus_{i=0}^{r-2}i_{Z*}\mathcal{A}_Z^{p-1-i,q-1-i},
\end{displaymath}
for any $p$, $q$. Define $\bar{\partial}:\mathcal{F}^{p,*}\rightarrow\mathcal{F}^{p,*+1}$ as $(\alpha,\beta_0,...,\beta_{r-2})\mapsto (\bar{\partial}_{\eta}\alpha,\bar{\partial}_{\eta|_Z}\beta_0,...,\bar{\partial}_{\eta|_Z}\beta_{r-2})$. For any $p$, $(\mathcal{F}^{p,\bullet},\bar{\partial})$ is a complex of sheaves.
Let $t=\frac{i}{2\pi}\Theta(\mathcal{O}_E(-1))\in\mathcal{A}^{1,1}(E)$. For any open subset $U$ in $X$, define $\mathcal{F}^{p,q}(U)\rightarrow \mathcal{D}^{\prime p,q}(\widetilde{U})$ as
\begin{displaymath}
\varphi_U=\left\{
 \begin{array}{ll}
(\pi|_{\widetilde{U}})^*+\sum_{i=0}^{r-2}(i_{E\cap \widetilde{U}})_*\circ (t^i|_{E\cap \widetilde{U}}\wedge)\circ (\pi|_{E\cap \widetilde{U}})^*,&~Z\cap U\neq\emptyset\\
 &\\
 (\pi|_{\widetilde{U}})^*,&~Z\cap U=\emptyset,
 \end{array}
 \right.
\end{displaymath}
where $\widetilde{U}=\pi^{-1}(U)$ and $i_{E\cap \widetilde{U}}:E\cap \widetilde{U}\rightarrow \widetilde{U}$ is the inclusion. Clearly, $\bar{\partial}_{\tilde{\eta}}\circ\varphi_U=\varphi_U\circ\bar{\partial}$. Hence, $\varphi_U$ induces a morphism of vector spaces
\begin{displaymath}
\Phi_U:H_{\eta}^{p,q}(U)\oplus \bigoplus_{i=0}^{r-2}H_{\eta|_Z}^{p-1-i,q-1-i}(Z\cap U)\rightarrow H_{\tilde{\eta}}^{p,q}(\widetilde{U}).
\end{displaymath}
We need to prove that $\Phi_X$ is an isomorphism.

For open sets $V\subseteq U$, denote by $\rho^U_V:\mathcal{F}^{p,q}(U)\rightarrow \mathcal{F}^{p,q}(V)$ the restriction of the sheaf $\mathcal{F}^{p,q}$ and $j^U_V:\mathcal{D}^{\prime p,q}(\widetilde{U})\rightarrow \mathcal{D}^{\prime p,q}(\widetilde{V})$ the  restriction of currents. By Lemma \ref{com},  $j^U_V\circ\varphi_U=\varphi_V\circ\rho^U_V$. Given $p$, for any open subsets $U$, $V$ in $X$, there is a commutative diagram of  complexes
\begin{displaymath}
\xymatrix{
0\ar[r]&\mathcal{F}^{p,\bullet}(U\cup V)\ar[d]^{\varphi_{U\cup V}}\quad \ar[r]^{(\rho_U^{U\cup V},\rho_V^{U\cup V})\quad\quad} & \quad\mathcal{F}^{p,\bullet}(U)\oplus \mathcal{F}^{p,\bullet}(V)\ar[d]^{(\varphi_{U},\varphi_{V})}\quad\ar[r]^{\quad\quad\rho_{U\cap V}^U-\rho_{U\cap V}^V}& \quad\mathcal{F}^{p,\bullet}(U\cap V) \ar[d]^{\varphi_{U\cap V}}\ar[r]& 0\\
0\ar[r]&\mathcal{D}^{\prime p,\bullet}(\widetilde{U}\cup \widetilde{V})     \quad  \ar[r]^{(j_U^{U\cup V},j_V^{U\cup V})\quad\quad}& \quad\mathcal{D}^{\prime p,\bullet}(\widetilde{U})\oplus \mathcal{D}^{\prime p,\bullet}(\widetilde{V})    \quad\ar[r]^{\quad\quad j_{U\cap V}^U-j_{U\cap V}^ V} & \quad\mathcal{D}^{\prime p,\bullet}(\widetilde{U}\cap \widetilde{V})     \ar[r]& 0 }.
\end{displaymath}
The two rows are both exact sequences of complexes. For convenience, denote
\begin{displaymath}
L^{p,q}(U)=H_{\eta}^{p,q}(U)\oplus \bigoplus_{i=0}^{r-2}H_{\eta|_Z}^{p-1-i,q-1-i}(Z\cap U).
\end{displaymath}
Therefore, we have a commutative diagram of long exact sequences
\begin{displaymath}
\tiny{\xymatrix{
    \cdots\ar[r]&L^{p,q-1}(U\cap V)\ar[d]^{\Phi_{U\cap V}}\ar[r]&L^{p,q}(U\cup V) \ar[d]^{\Phi_{U\cup V}} \ar[r]& L^{p,q}(U)\oplus L^{p,q}(V) \ar[d]^{(\Phi_U,\Phi_V)}\ar[r]&  L^{p,q}(U\cap V)\ar[d]^{\Phi_{U\cap V}}\ar[r]&L^{p,q+1}(U\cup V)\ar[d]^{\Phi_{U\cup V}}\ar[r]&\cdots\\
 \cdots\ar[r]&H_{\tilde{\eta}}^{p,q-1}(\widetilde{U}\cap \widetilde{V})     \ar[r] & H_{\tilde{\eta}}^{p,q}(\widetilde{U}\cup \widetilde{V})\ar[r]&H_{\tilde{\eta}}^{p,q}(\widetilde{U})\oplus H_{\tilde{\eta}}^{p,q}(\widetilde{V})       \ar[r]& H_{\tilde{\eta}}^{p,q}(\widetilde{U}\cap \widetilde{V})     \ar[r] & H_{\tilde{\eta}}^{p,q+1}(\widetilde{U}\cup \widetilde{V})\ar[r]&\cdots     .}}
\end{displaymath}
Following the steps in the proof of Theorem \ref{L-H}, we proved that $\Phi_X$ is an isomorphism.
\end{proof}

\section{Stability of $\theta$-betti and $\eta$-hodge numbers}
For a  compact smooth manifold $X$ and a real (resp. complex) closed $1$-form $\theta$ on $X$, $b_{k}(X,\theta):=$$\textrm{dim}_{\mathbb{R}}H_\theta^k(X)$ (resp. $\textrm{dim}_{\mathbb{C}}H_\theta^k(X,\mathbb{C})$) is called $k$-th \emph{$\theta$-betti number} of $X$. Similarly, for a compact complex manifold $X$ and a $\bar{\partial}$-closed $(0,1)$-form $\eta$ on $X$, $h_{\eta}^{p,q}(X):=\textrm{dim}_{\mathbb{C}}H_\eta^{p,q}(X)$ is called $(p,q)$-th \emph{$\eta$-hodge number} of $X$.

\begin{lemma}\label{inv}
Let $f:X\rightarrow Y$ be a proper surjective submersion of connected smooth manifolds and $\theta$ a real $($resp. complex$)$ closed $1$-form on $X$. Then, for any $k$, the higher direct image $R^kf_*\underline{\mathbb{R}}_{X,\theta}$  $($resp. $R^kf_*\underline{\mathbb{C}}_{X,\theta}$$)$ is a local system of $\mathbb{R}$ $($resp. $\mathbb{C}$$)$-modules with finite rank.

In particular,
\begin{displaymath}
y\mapsto b_k(X_y,\theta|_{X_y})
\end{displaymath}
is a constant function, where $X_y=f^{-1}(y)$ for any $y\in Y$.
\end{lemma}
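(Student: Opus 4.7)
The plan is to reduce, via Ehresmann's fibration theorem, to a trivial product situation in which Lemma \ref{lem fun 1} allows us to identify the weight $\theta$-sheaf as the pullback of the corresponding sheaf on the fibre, and then to compute the higher direct image using standard fibre-bundle cohomology.

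Fix $y_0\in Y$. Since $f$ is a proper submersion of smooth manifolds, Ehresmann's theorem supplies a contractible open neighborhood $V$ of $y_0$ (for instance a coordinate ball) and a diffeomorphism $\Phi:f^{-1}(V)\xrightarrow{\sim}V\times F$, with $F=X_{y_0}=f^{-1}(y_0)$, satisfying $f=\textrm{pr}_1\circ\Phi$ and $\Phi|_{F}=\textrm{id}_{F}$ under the identification $F=\{y_0\}\times F$. The fibre $F$ is compact because $f$ is proper. Put $\tilde{\theta}=(\Phi^{-1})^*(\theta|_{f^{-1}(V)})$; then $\tilde{\theta}|_{\{y_0\}\times F}=\theta|_F=\textrm{pr}_2^*(\theta|_F)|_{\{y_0\}\times F}$. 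Since $V$ is contractible, the inclusion $\{y_0\}\times F\hookrightarrow V\times F$ is a homotopy equivalence, so the restriction map $H^1(V\times F,\mathbb{C})\to H^1(F,\mathbb{C})$ is an isomorphism. Hence $\tilde{\theta}$ and $\textrm{pr}_2^*(\theta|_F)$ are cohomologous, so their difference equals $du$ for some smooth $u$ on $V\times F$. By Lemma \ref{lem fun 1} (1) and (3),
\[
\underline{\mathbb{C}}_{X,\theta}|_{f^{-1}(V)}\;\cong\;\Phi^*\,\underline{\mathbb{C}}_{V\times F,\tilde{\theta}}\;\cong\;\Phi^*\bigl(\textrm{pr}_2^{-1}\underline{\mathbb{C}}_{F,\theta|_F}\bigr).
\]

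Setting $\mathcal{G}:=\underline{\mathbb{C}}_{F,\theta|_F}$, a rank-one local system on the compact manifold $F$, it then suffices to show that $R^k\textrm{pr}_1{}_*\textrm{pr}_2^{-1}\mathcal{G}$ is the constant sheaf on $V$ with stalk $H^k(F,\mathcal{G})$. For any contractible open $W\subseteq V$, proper base change combined with the Künneth formula for sheaf cohomology gives
\[
H^k(W\times F,\textrm{pr}_2^{-1}\mathcal{G})=\bigoplus_{i+j=k}H^i(W,\mathbb{C})\otimes H^j(F,\mathcal{G})=H^k(F,\mathcal{G}),
\]
compatibly with inclusions of contractible opens, so the sheaf is constant on $V$. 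Moreover $H^k(F,\mathcal{G})$ is finite-dimensional because $F$ is a compact manifold with rank-one local-system coefficients. Transporting back via $\Phi$, we obtain that $R^kf_*\underline{\mathbb{C}}_{X,\theta}$ is locally constant of finite rank near $y_0$; since $y_0\in Y$ was arbitrary, it is a local system of finite rank on $Y$. The real case is identical. The ``in particular'' statement follows since the stalk at $y$ is $H^k(X_y,\underline{\mathbb{C}}_{X_y,\theta|_{X_y}})\cong H^k_{\theta|_{X_y}}(X_y,\mathbb{C})$.

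The main obstacle is the trivialization step: identifying the restriction of $\underline{\mathbb{C}}_{X,\theta}$ to a trivializing neighborhood as a genuine external pullback from the fibre. This requires upgrading ``cohomologous closed forms on $V\times F$'' to ``exact difference'', and it is precisely here that the choice of $V$ contractible is used, so that $H^1(V\times F,\mathbb{C})\cong H^1(F,\mathbb{C})$ and Lemma \ref{lem fun 1}(1) applies. Once this identification is in place, the remainder of the argument is the standard Leray-sheaf calculation for trivial fibre bundles.
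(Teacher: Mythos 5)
Your proposal is correct and follows essentially the same route as the paper: Ehresmann trivialization over a contractible neighborhood, the isomorphism $H^1(V\times F,\mathbb{C})\cong H^1(F,\mathbb{C})$ to write the transported form as a pullback from the fibre plus an exact term, Lemma \ref{lem fun 1} to identify the weight sheaf with $\mathrm{pr}_2^{-1}\underline{\mathbb{C}}_{F,\theta|_F}$, and proper base change to conclude the higher direct image is constant with stalk $H^k_{\theta|_{X_y}}(X_y)$. The only cosmetic difference is that you justify the cohomologous step by restricting to the fibre $\{y_0\}\times F$, while the paper simply invokes the K\"unneth isomorphism on $H^1$; the substance is identical.
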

\begin{proof}
We may assume $Y$ is an open ball and only prove the real case.

Let $o$ be the center of $Y$. By Ehresmann's trivialization theorem, there exists a diffeomorphism $T:X_o\times Y\rightarrow X$, such that $pr_2=f\circ T$, where $pr_2$ is the projection from $X_o\times Y$ to $Y$. By Lemma \ref{lem fun 1} $(3)$,
\begin{equation}\label{1}
\begin{aligned}
R^kf_*\underline{\mathbb{R}}_{X,\theta}\cong&R^kf_*(T_*\underline{\mathbb{R}}_{X_o\times Y,T^*\theta})\\
\cong&R^k(pr_2)_*\underline{\mathbb{R}}_{X_o\times Y,T^*\theta}.
\end{aligned}
\end{equation}
Set $pr_2$ the projection from $X_o\times Y$ to $X_o$. By K\"{u}nneth formula, $pr_1^*:H^1(X_o)\rightarrow H^1(X_o\times Y)$ is an isomorphism, where we use the fact that $H^0(Y)=\mathbb{R}$ and $H^1(Y)=0$. So, $T^*\theta$ can be written as $pr_1^*\theta_o+\textrm{d}u$ for a closed 1-form $\theta_o$ on $X_o$ and a smooth function $u$ on $X_o\times Y$. Consider the cartesian diagram
\begin{displaymath}
\xymatrix{
 X_o\times Y\ar[d]^{pr_1} \ar[r]^{\quad pr_2}& Y\ar[d]^{p_Y} \\
    X_o               \ar[r]^{p_{X_o}}& \{pt\},}
\end{displaymath}
where $\{pt\}$ is a single point space  and $p_{X_o}$, $p_Y$ are constant map. Evidently, $pr_2$ and $p_{X_o}$ are proper.  By Lemma \ref{lem fun 1} and \cite{I}, p. 316, Corollary 1.5,
\begin{equation}\label{2}
\begin{aligned}
R^k(pr_2)_*\underline{\mathbb{R}}_{X_o\times Y,T^*\theta}\cong&R^k(pr_2)_*\underline{\mathbb{R}}_{X_o\times Y,pr_1^*\theta_o}\\
\cong&R^k(pr_2)_*(pr_1^{-1}\underline{\mathbb{R}}_{X_o,\theta_o})\\
\cong&p_Y^{-1}R^k(p_{X_o})_*(\underline{\mathbb{R}}_{X_o,\theta_o})\\
=&\underline{\mathbb{R}}_{X_o\times Y}\otimes_{\mathbb{R}}H_{\theta_o}^k(X_o).
\end{aligned}
\end{equation}
Combined $($\ref{1}$)$ and $($\ref{2}$)$, $R^kf_*\underline{\mathbb{R}}_{X,\theta}$ is constant on the open ball $Y$. Moreover, the stalk $(R^kf_*\underline{\mathbb{R}}_{X,\theta})_y=H^k(X_y,\underline{\mathbb{R}}_{X,\theta}|_{X_y})=H_{\theta|_{X_y}}^k(X_y)$. We complete the proof.
\end{proof}
Let $X$ be a compact complex manifold and  $\theta=\bar{\zeta}+\eta$ a complex closed $1$-form on $X$, where $\zeta$ and $\eta$ are both $(0,1)$-forms. For the double complex $(\mathcal{A}^{*,*}(X), \partial_{\bar{\zeta}}, \bar{\partial}_\eta)$, the associated simple complex is  $(\mathcal{A}_{\mathbb{C}}^{*}(X), d_\theta)$, which has a natural  filtration
\begin{displaymath}
F^p\mathcal{A}_{\mathbb{C}}^k(X)=\bigoplus_{r\geq p, r+s=k}\mathcal{A}^{r,s}(X).
\end{displaymath}
We get a spectral sequence $(E_r^{*,*},d_r,H^*)$, where $E_1^{p,q}=H_{\eta}^{p,q}(X)$ and $H^k=H_{\theta}^{k}(X,\mathbb{C})$. If $\theta=0$, this is \emph{Fr\"{o}licher spectral sequence}. Clearly, for $p<0$, or $p>n$, or $q<0$, or $q>n$, $E_r^{p,q}=0$. So, for given $p$, $q$,  if $r$ is enough large,
\begin{displaymath}
E_r^{p,q}=E_{r+1}^{p,q}=...=E_\infty^{p,q}=F^pH_{\theta}^{p+q}(X,\mathbb{C})/F^{p+1}H_{\theta}^{p+q}(X,\mathbb{C}).
\end{displaymath}
Since $\textrm{dim}_{\mathbb{C}}E_{r+1}^{p,q}\leq \textrm{dim}_{\mathbb{C}}E_r^{p,q}$ for any $r$,
\begin{displaymath}
b_{k}(X,\theta)=\sum_{p+q=k}E_\infty^{p,q}\leq \sum_{p+q=k}E_1^{p,q}=\sum_{p+q=k}h_{\eta}^{p,q}(X).
\end{displaymath}
The degeneration of this spectral sequence at $E_1$ on compact locally conformally K\"{a}hler manifold is proved in some conditions in \cite{OVV2}.

We say that $f:X\rightarrow Y$ is \emph{a family of complex manifolds}, if $f$ is a proper surjective holomorphic submersion.
\begin{theorem}
Let $f:X\rightarrow Y$ be a family of complex manifolds and $\theta$ a complex closed $1$-form on $X$. Assume $b_k(X_o,\theta|_{X_o})=\sum_{p+q=k}h_{\eta|_{X_o}}^{p,q}(X_o)$ for some $k$ and some point $o\in Y$, where $\eta$ is the $(0,1)$-part of $\theta$. Then, for any $t$ near $o$, $h_{\eta|_{X_t}}^{p,q}(X_t)=h_{\eta|_{X_o}}^{p,q}(X_o)$, where $\eta$ is the $(0,1)$-part of $\theta$ and $p+q=k$.
\end{theorem}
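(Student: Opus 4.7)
The plan is a dimension-counting argument combining three ingredients that are already (essentially) in hand by this point of the paper: the constancy of the twisted Betti number $b_k(X_t,\theta|_{X_t})$ from Lemma \ref{main2}, the Fr\"olicher-type inequality $b_k(X,\theta) \leq \sum_{p+q=k} h_\eta^{p,q}(X)$ derived just above the theorem statement, and upper semicontinuity of each individual Dolbeault-Morse-Novikov number $h_{\eta|_{X_t}}^{p,q}(X_t)$ in $t$. Equality at $t=o$ of the sum-bound forces equality of both inequalities at every nearby $t$, which in turn forces termwise equality.

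First I would apply Lemma \ref{main2} to the family $f:X\to Y$ and the complex closed $1$-form $\theta$, so that $R^kf_*\underline{\mathbb{C}}_{X,\theta}$ is a local system near $o$; its stalk at $t$ being $H^k_{\theta|_{X_t}}(X_t,\mathbb{C})$, this gives $b_k(X_t,\theta|_{X_t}) = b_k(X_o,\theta|_{X_o})$ for all $t$ in a neighborhood of $o$.

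Next I would establish upper semicontinuity of $h_{\eta|_{X_t}}^{p,q}(X_t)$ by Grauert's theorem applied to a suitable relative sheaf. The natural candidate on the total space is
$$\mathcal{G}^p := \Omega_{X/Y}^p \otimes_{\mathcal{O}_X} \mathcal{O}_{X,\eta}.$$
Since $f$ is a holomorphic submersion, $\Omega_{X/Y}^p$ is $\mathcal{O}_X$-locally free, and $\mathcal{O}_{X,\eta}$ is $\mathcal{O}_X$-locally free of rank $1$ by the local description $\mathcal{O}_{X,\eta}=e^{-u}\mathcal{O}_X$; hence $\mathcal{G}^p$ is locally free and in particular $f$-flat. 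Denoting by $i_t:X_t\hookrightarrow X$ the fiber inclusion, Lemma \ref{lem fun 3}(3) combined with (\ref{fundamental}) gives
$$i_t^*\mathcal{G}^p = \Omega_{X_t}^p \otimes_{\mathcal{O}_{X_t}} \mathcal{O}_{X_t,\,\eta|_{X_t}} = \Omega_{X_t,\eta|_{X_t}}^p,$$
so Grauert's upper semicontinuity theorem implies that $t\mapsto \dim_{\mathbb{C}}H^q(X_t,\Omega_{X_t,\eta|_{X_t}}^p) = h_{\eta|_{X_t}}^{p,q}(X_t)$ is upper semicontinuous on $Y$.

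Finally I would assemble the inequalities. Applying the Fr\"olicher-type spectral sequence fiberwise yields $b_k(X_t,\theta|_{X_t}) \leq \sum_{p+q=k} h_{\eta|_{X_t}}^{p,q}(X_t)$, with equality at $t=o$ by hypothesis. Chaining this with the two previous steps gives, for $t$ near $o$,
$$\sum_{p+q=k} h_{\eta|_{X_o}}^{p,q}(X_o) = b_k(X_o,\theta|_{X_o}) = b_k(X_t,\theta|_{X_t}) \leq \sum_{p+q=k} h_{\eta|_{X_t}}^{p,q}(X_t) \leq \sum_{p+q=k} h_{\eta|_{X_o}}^{p,q}(X_o),$$
so equality holds throughout. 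Since each summand in the last inequality is already dominated individually by the corresponding summand at $o$, equality of the sums forces termwise equality $h_{\eta|_{X_t}}^{p,q}(X_t) = h_{\eta|_{X_o}}^{p,q}(X_o)$ for each $p+q=k$. The only nontrivial step is the third one, whose subtlety is purely bookkeeping: one must identify the fiberwise restriction of the relative sheaf $\mathcal{G}^p$ correctly, which is precisely what Lemma \ref{lem fun 3}(3) provides; once this is done, no new hard analysis is needed.
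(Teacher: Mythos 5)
Your proposal is correct and follows essentially the same route as the paper: identify $i_t^*\bigl(\Omega_{X/Y}^p\otimes_{\mathcal{O}_X}\mathcal{O}_{X,\eta}\bigr)=\Omega_{X_t,\eta|_{X_t}}^p$, invoke the semicontinuity theorem for this locally free relative sheaf, combine the fiberwise Fr\"olicher-type inequality with the constancy of $b_k(X_t,\theta|_{X_t})$ from Lemma \ref{main2}, and squeeze to get termwise equality. The only difference is presentational (you write the chain of equalities starting from the constant Betti number, while the paper closes the loop at the end), so nothing further is needed.
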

\begin{proof}
Let $\Omega_{X/Y}^1=\Omega_X^1/f^*\Omega_Y^1$ be the sheaf of the relative holomorphic $1$-forms and $\Omega_{X/Y}^p=\bigwedge^p\Omega_{X/Y}^1$. Set $i_t:X_t\rightarrow X$ the inclusion.  Then $i_t^*\Omega_{X/Y}^p=\Omega_{X_t}^p$, seeing \cite{Vo}, p. 234-235.  For the locally free sheaf $\Omega_{X/Y}^p\otimes_{\mathcal{O}_X}\mathcal{O}_{X,\eta}$, we have
\begin{displaymath}
i_t^*(\Omega_{X/Y}^p\otimes_{\mathcal{O}_X}\mathcal{O}_{X,\eta})=i_t^*\Omega_{X/Y}^p\otimes_{\mathcal{O}_{X_t}}i_t^*\mathcal{O}_{X,\eta}=\Omega_{X_t,\eta|_{X_t}}^p.
\end{displaymath}
By the semi-continuity theorem, $h_{\eta|_{X_t}}^{p,q}(X_t)\leq h_{\eta|_{X_o}}^{p,q}(X_o)$ for any $t$ near $o$. So
\begin{displaymath}
b_k(X_o,\theta|_{X_o})=\sum_{p+q=k}h_{\eta|_{X_o}}^{p,q}(X_o)\geq \sum_{p+q=k}h_{\eta|_{X_t}}^{p,q}(X_t)\geq b_k(X_t,\eta|_{X_t}).
\end{displaymath}
By Lemma \ref{inv}, $h_{\eta|_{X_t}}^{p,q}(X_t)=h_{\eta|_{X_o}}^{p,q}(X_o)$ for any $p+q=k$.
\end{proof}

By Hodge decomposition of complex manifolds in Fujiki class $\mathcal{C}$, we get the following corollary immediately.
\begin{corollary}
Let $f:X\rightarrow Y$ be a family of complex manifolds and $\theta$ a complex closed $1$-form on $X$. Assume,  for a point $o\in Y$, $X_o$ is in the Fujiki class $\mathcal{C}$ and $\theta|_{X_o}=0$. Then, for any $t$ near $o$, $h_{\eta|_{X_t}}^{p,q}(X_t)=h^{p,q}(X_o)$, for any $p$, $q$, where $\eta$ is the $(0,1)$-part of $\theta$.
\end{corollary}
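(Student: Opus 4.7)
The plan is to derive this corollary from the preceding theorem on stability of $\eta$-hodge numbers, with the Fujiki class hypothesis supplying exactly the identity between the $\theta$-betti number and the sum of $\eta$-hodge numbers over the central fiber. No new machinery is required; the work lies in unpacking what $\theta|_{X_o}=0$ gives us.

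First I would observe that since $\eta$ is the $(0,1)$-part of $\theta$, the vanishing $\theta|_{X_o}=0$ forces $\eta|_{X_o}=0$ on $X_o$. Consequently $\bar{\partial}_{\eta|_{X_o}}=\bar{\partial}$ and $\textrm{d}_{\theta|_{X_o}}=\textrm{d}$ on $X_o$, so the relevant twisted cohomologies on the central fiber collapse to their classical counterparts:
\begin{equation*}
H_{\eta|_{X_o}}^{p,q}(X_o)=H^{p,q}(X_o), \qquad H_{\theta|_{X_o}}^{k}(X_o,\mathbb{C})=H^{k}(X_o,\mathbb{C}).
\end{equation*}
In particular $h_{\eta|_{X_o}}^{p,q}(X_o)=h^{p,q}(X_o)$ and $b_k(X_o,\theta|_{X_o})=b_k(X_o)$.

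Next I would invoke the Hodge decomposition for compact complex manifolds in the Fujiki class $\mathcal{C}$, which is bimeromorphically invariant and hence transported from the Kähler setting: for every $k$,
\begin{equation*}
b_k(X_o)=\sum_{p+q=k}h^{p,q}(X_o).
\end{equation*}
Combining with the previous identifications yields, for every $k$,
\begin{equation*}
b_k(X_o,\theta|_{X_o})=\sum_{p+q=k}h_{\eta|_{X_o}}^{p,q}(X_o),
\end{equation*}
which is precisely the hypothesis of the preceding theorem.

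Finally I would apply that theorem once for each $k$: for every $t$ sufficiently close to $o$ (the neighborhood may a priori depend on $k$, but since $h^{p,q}(X_o)=0$ for $p+q$ outside $[0,2\dim_{\mathbb{C}}X_o]$, only finitely many $k$ are nontrivial, and we take the intersection of the corresponding neighborhoods) we obtain $h_{\eta|_{X_t}}^{p,q}(X_t)=h_{\eta|_{X_o}}^{p,q}(X_o)=h^{p,q}(X_o)$ for all $p,q$. The only minor subtlety is this finite intersection of neighborhoods, which is straightforward; the substantive input is the Fujiki-class Hodge decomposition, and the whole argument then reduces to citing the previous theorem.
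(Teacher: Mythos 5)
Your proof is correct and follows exactly the route the paper intends: the paper's own justification is the one-line remark that the corollary follows immediately from the preceding theorem via the Hodge decomposition for Fujiki class $\mathcal{C}$ manifolds, which is precisely the chain $\theta|_{X_o}=0\Rightarrow\eta|_{X_o}=0\Rightarrow b_k(X_o,\theta|_{X_o})=b_k(X_o)=\sum_{p+q=k}h^{p,q}(X_o)=\sum_{p+q=k}h^{p,q}_{\eta|_{X_o}}(X_o)$ that you spell out. Your extra care about intersecting the finitely many neighborhoods over the relevant degrees $k$ is a harmless refinement the paper leaves implicit.
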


\noindent{\bf Acknowledgements.}\,
I would like to express my gratitude to referees for their helpful suggestions and careful reading of my manuscript.

\renewcommand{\refname}{\leftline{\textbf{References}}}

\makeatletter\renewcommand\@biblabel[1]{${#1}.$}\makeatother

\end{document}